\documentclass{amsart}
\usepackage{amssymb, amsbsy, amsthm, amsmath, amstext, amsopn}
\usepackage[all]{xy}
\usepackage{amsfonts}
\usepackage{amscd}
\hyphenation{para-met-riz-ed para-met-rize}
\usepackage{mathrsfs}
\usepackage{verbatim}

\newtheorem{thm}{Theorem} [section]
\newtheorem{lemma}[thm]{Lemma}

\newtheorem{corollary}[thm]{Corollary}
\newtheorem{prop}[thm]{Proposition}

\theoremstyle{definition}

\newtheorem{defn}[thm]{Definition}

\theoremstyle{remark}

\newtheorem{remark}[thm]{Remark}

\begin{document}

\numberwithin{equation}{section}

\newcommand{\hs}{\mbox{\hspace{.4em}}}
\newcommand{\ds}{\displaystyle}
\newcommand{\bd}{\begin{displaymath}}
\newcommand{\ed}{\end{displaymath}}
\newcommand{\bcd}{\begin{CD}}
\newcommand{\ecd}{\end{CD}}

\newcommand{\proj}{\operatorname{Proj}}
\newcommand{\bproj}{\underline{\operatorname{Proj}}}
\newcommand{\spec}{\operatorname{Spec}}
\newcommand{\bspec}{\underline{\operatorname{Spec}}}
\newcommand{\pline}{{\mathbf P} ^1}
\newcommand{\pplane}{{\mathbf P}^2}
\newcommand{\coker}{{\operatorname{coker}}}
\newcommand{\ldb}{[[}
\newcommand{\rdb}{]]}

\newcommand{\Sym}{\operatorname{Sym}^{\bullet}}
\newcommand{\Symp}{\operatorname{Sym}}
\newcommand{\Pic}{\operatorname{Pic}}
\newcommand{\AAut}{\operatorname{Aut}}
\newcommand{\PAut}{\operatorname{PAut}}

\newcommand{\too}{\twoheadrightarrow}
\newcommand{\C}{{\mathbf C}}
\newcommand{\cA}{{\mathcal A}}
\newcommand{\cS}{{\mathcal S}}
\newcommand{\cV}{{\mathcal V}}
\newcommand{\cM}{{\mathcal M}}
\newcommand{\bA}{{\mathbf A}}
\newcommand{\cB}{{\mathcal B}}
\newcommand{\cC}{{\mathcal C}}
\newcommand{\cD}{{\mathcal D}}
\newcommand{\D}{{\mathcal D}}
\newcommand{\cs}{{\mathbf C} ^*}
\newcommand{\boldc}{{\mathbf C}}
\newcommand{\cE}{{\mathcal E}}
\newcommand{\cF}{{\mathcal F}}
\newcommand{\cH}{{\mathcal H}}
\newcommand{\cJ}{{\mathcal J}}
\newcommand{\cK}{{\mathcal K}}
\newcommand{\cL}{{\mathcal L}}
\newcommand{\baL}{{\overline{\mathcal L}}}
\newcommand{\M}{{\mathcal M}}
\newcommand{\bM}{{\mathbf M}}
\newcommand{\bm}{{\mathbf m}}
\newcommand{\cN}{{\mathcal N}}
\newcommand{\theo}{\mathcal{O}}
\newcommand{\cP}{{\mathcal P}}
\newcommand{\cR}{{\mathcal R}}
\newcommand{\boldp}{{\mathbf P}}
\newcommand{\boldq}{{\mathbf Q}}
\newcommand{\bbL}{{\mathbf L}}
\newcommand{\cQ}{{\mathcal Q}}
\newcommand{\cO}{{\mathcal O}}
\newcommand{\Oo}{{\mathcal O}}
\newcommand{\OX}{{\Oo_X}}
\newcommand{\OY}{{\Oo_Y}}
\newcommand{\otY}{{\underset{\OY}{\ot}}}
\newcommand{\otX}{{\underset{\OX}{\ot}}}
\newcommand{\cU}{{\mathcal U}}\newcommand{\cX}{{\mathcal X}}
\newcommand{\cW}{{\mathcal W}}
\newcommand{\boldz}{{\mathbf Z}}
\newcommand{\qgr}{\operatorname{q-gr}}
\newcommand{\gr}{\operatorname{gr}}
\newcommand{\coh}{\operatorname{coh}}
\newcommand{\End}{\operatorname{End}}
\newcommand{\Hom}{\operatorname{Hom}}
\newcommand{\uHom}{\underline{\operatorname{Hom}}}
\newcommand{\uHomY}{\uHom_{\OY}}
\newcommand{\uHomX}{\uHom_{\OX}}
\newcommand{\Ext}{\operatorname{Ext}}
\newcommand{\bExt}{\operatorname{\bf{Ext}}}
\newcommand{\Tor}{\operatorname{Tor}}

\newcommand{\inv}{^{-1}}
\newcommand{\airtilde}{\widetilde{\hspace{.5em}}}
\newcommand{\airhat}{\widehat{\hspace{.5em}}}
\newcommand{\nt}{^{\circ}}
\newcommand{\del}{\partial}

\newcommand{\supp}{\operatorname{supp}}
\newcommand{\GK}{\operatorname{GK-dim}}
\newcommand{\hd}{\operatorname{hd}}
\newcommand{\id}{\operatorname{id}}
\newcommand{\res}{\operatorname{res}}
\newcommand{\lrar}{\leadsto}
\newcommand{\im}{\operatorname{Im}}
\newcommand{\HH}{\operatorname{H}}
\newcommand{\TF}{\operatorname{TF}}
\newcommand{\Bun}{\operatorname{Bun}}
\newcommand{\Hilb}{\operatorname{Hilb}}
\newcommand{\Fact}{\operatorname{Fact}}
\newcommand{\F}{\mathcal{F}}
\newcommand{\nthord}{^{(n)}}
\newcommand{\Aut}{\underline{\operatorname{Aut}}}
\newcommand{\Gr}{\operatorname{\bf Gr}}
\newcommand{\GR}{\operatorname{\bf GR}}
\newcommand{\Fr}{\operatorname{Fr}}
\newcommand{\GL}{\operatorname{GL}}
\newcommand{\gl}{\mathfrak{gl}}
\newcommand{\SL}{\operatorname{SL}}
\newcommand{\ff}{\footnote}
\newcommand{\ot}{\otimes}
\def\Ext{\operatorname {Ext}}
\def\Hom{\operatorname {Hom}}
\def\Ind{\operatorname {Ind}}
\def\bbZ{{\mathbb Z}}

\newcommand{\nc}{\newcommand}
\newcommand{\on}{\operatorname}
\nc{\cont}{\on{cont}}
\nc{\rmod}{\on{mod}}
\nc{\Mtil}{\widetilde{M}}
\nc{\wb}{\overline}
\nc{\wt}{\widetilde}
\nc{\wh}{\widehat}
\nc{\sm}{\setminus}
\nc{\mc}{\mathcal}
\nc{\mbb}{\mathbb}
\nc{\Mbar}{\wb{M}}
\nc{\Nbar}{\wb{N}}
\nc{\Mhat}{\wh{M}}
\nc{\pihat}{\wh{\pi}}
\nc{\JYX}{\cJ_{Y\leftarrow X}}
\nc{\phitil}{\wt{\phi}}
\nc{\Qbar}{\wb{Q}}
\nc{\DYX}{\D_{Y\leftarrow X}}
\nc{\DXY}{\D_{X\to Y}}
\nc{\dR}{\stackrel{\bbL}{\underset{\D_X}{\ot}}}
\nc{\Winfi}{\cW_{1+\infty}}
\nc{\K}{{\mc K}}
\nc{\unit}{{\bf \on{unit}}}
\nc{\boxt}{\boxtimes}
\nc{\xarr}{\stackrel{\rightarrow}{x}}

\nc{\aline}{{\mathbb A}^1}
\nc{\Der}{\on{Der}}
\nc{\fM}{{\mathfrak M}}
\nc{\BunDhat}{\wh{\bf{Bun}}_{\D}}
\nc{\BunD}{{\bf Bun}_{\D}}
\nc{\bs}{\backslash}
\nc{\scr}{\mc}

\title{$\cW$-symmetry of the ad\`elic Grassmannian}
\author{David Ben-Zvi}
\address{Department of Mathematics\\University of Texas at Austin\\Austin, TX 78712}
\email{benzvi@math.utexas.edu}
\author{Thomas Nevins}
\address{Department of Mathematics\\University of Illinois at
Urbana-Champaign\\Urbana, IL 61801}
\email{nevins@illinois.edu}

\begin{abstract}
We give a geometric construction of the $\Winfi$ vertex algebra as the infinitesimal
form of a factorization structure on an ad\`elic Grassmannian. 
This gives a concise interpretation of the higher symmetries and B\"acklund-Darboux
transformations
for the KP hierarchy and its multicomponent extensions in terms of a version of
``$\Winfi$-geometry": 
the geometry of $\D$-bundles on smooth curves, or equivalently torsion-free 
sheaves on cuspidal curves.
\end{abstract}

\maketitle

\section{Introduction}
There is a beautiful relationship between the conformal field theory of free
fermions, the KP hierarchy, and the 
geometry of moduli spaces of curves and bundles. In particular, the deformation theory of
curves and of line (or vector) bundles
on them is realized algebraically by the Virasoro and Heisenberg (or Kac-Moody)
algebra symmetries of the free fermion system.

The algebra $\Winfi$ (a central extension of differential operators with Laurent coefficients) 
and its more complicated
nonlinear reductions, the $\cW_n$ algebras,  themselves play a well-established role
as algebras of ``higher symmetries'' in
integrable systems and string theory.  For example, $\cW$-algebras arise as the
additional symmetries of the KP and Toda hierarchies, 
and tau-functions satisfying $\cW$-constraints arise as partition 
functions of matrix models and topological string theories (see e.g. \cite{vM}).  
Moreover, the search for $\cW$-geometry and $\cW$-gravity
generalizing the moduli of curves and topological gravity has also been the topic of
an extensive 
literature (see for example \cite{Hull,Pope,LO}).  

In the present paper, we
establish a ``global'' geometric realization of  these higher $\Winfi$-symmetries. 
More precisely, we give an algebro-geometric description of a factorization space
(Section \ref{factorization}) and prove
(Theorem \ref{comparison}) that it naturally integrates the $\Winfi$-symmetry of the
free fermion system (in a sense described below). 
Our work thus gives a precise analog of earlier work on the Virasoro and Kac-Moody
algebras, in which the role of 
the moduli of curves and bundles is played by the moduli of $\D$-{\em bundles}
(projective
modules over differential operators) on a smooth complex curve $X$, or equivalently by 
the moduli of torsion-free sheaves on {\em cusp quotients} of
$X$.

Beilinson and Drinfeld \cite{chiral} have introduced the notion of
factorization space as a nonlinear (or integrated) geometric counterpart to the
notion of a vertex algebra or 
chiral CFT, which captures the algebraic structure of Hecke correspondences (see \cite{book} for a review). 
The Beilinson-Drinfeld Grassmannian, a factorization space built out of the affine
Grassmannian of
a Lie group $G$, simultaneously encodes the 
infinitesimal (Kac-Moody) and global (Hecke) symmetries of bundles on curves and is
perhaps the central
object in the geometric Langlands correspondence \cite{Hecke}. 
The case of $G=GL_1$ encodes geometrically the vertex algebra of a free fermion, 
or in the language of integrable systems the KP flows together with the vertex
operator \cite{DJKM}.

In Section \ref{factorization}, for any smooth complex curve $X$ we define an algebro-geometric variant $\Gr_\D$
of the ad\`elic Grassmannian 
introduced by Wilson \cite{Wilson bispectral, Wilson CM}
in the study of the bispectral problem
and the rational solutions of KP (see Remark \ref{Ran space} for a discussion of the relationship of $\Gr_\D$ to Wilson's construction).  We then show  that 
$\Gr_\D$ is naturally a factorization space  in which 
the $GL_1$ Beilinson-Drinfeld Grassmannian embeds as a factorization subspace.  
This is a special case of a more general construction in Section \ref{factorization} of an ad\`elic
Grassmannian $\Gr_P$ associated to any $\D$-bundle $P$ on $X$; the case $P=\D^n$ leads to multicomponent
KP (and the $\Winfi({\mathfrak gl}_n)$ vertex algebra).

We show in Section \ref{W} that the ad\`elic Grassmannian realizes the $\Winfi$-algebra as
 the algebra of infinitesimal symmetries of $\D$-bundles:
\begin{thm}[See Theorem \ref{comparison thm}]
For any $c\in \C$, the chiral algebra $\Winfi^c({\mathfrak gl_n})$ on $X$
 at level $c$ associated to the $\Winfi({\mathfrak gl}_n)$-vertex algebra is isomorphic to the 
chiral algebra $\delta^c_{\D^n}$ of level $c$ delta functions along the unit section of the ad\`elic Grassmannian
$\Gr_{\D^n}$.
\end{thm}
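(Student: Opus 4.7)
My plan is to reduce the claimed isomorphism of chiral algebras to an isomorphism of vertex algebras at a single fiber, and then to compute both sides as induced vacuum modules for a Harish-Chandra pair.

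First, by the Beilinson-Drinfeld equivalence \cite{chiral} between translation-equivariant chiral algebras on $X$ and vertex algebras (via the fiber functor at a point $x \in X$), and since both $\Winfi^c(\gl_n)$ and $\delta^c_{\D^n}$ are manifestly translation-equivariant---the latter because $\Gr_{\D^n}$ and its level-$c$ line bundle are built $X$-equivariantly from the geometry of $\D$-bundles, as recorded in Section \ref{factorization}---it suffices to identify the two fibers $\Winfi^c(\gl_n)_x$ and $(\delta^c_{\D^n})_x$ as vertex algebras at a single point. Moreover, the vertex operations on $(\delta^c_{\D^n})_x$ are, by construction, determined by the factorization structure on $\Gr_{\D^n}$ restricted to infinitesimal neighborhoods of diagonals, so any identification of underlying Fock spaces that respects the Harish-Chandra structure will automatically be a vertex algebra isomorphism.

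Second, I would compute $(\delta^c_{\D^n})_x$ as an induced vacuum module. Writing $\cO_x = \C\ldb t \rdb$ and $\K_x = \C((t))$, the construction of $\Gr_{\D^n}$ presents the formal neighborhood of $\unit$ at $x$ as classifying $\D^n$-submodules of $\D^n \otimes_{\cO_x} \K_x$ infinitesimally close to the standard lattice $\D^n \otimes \cO_x$. Its formal tangent Lie algebra is therefore $\gl_n(\D_{\K_x})/\gl_n(\D_{\cO_x})$, and the level-$c$ line bundle endows the central extension $\widehat{\gl_n(\D_{\K_x})}$ by $\C c$ with a canonical splitting over $\gl_n(\D_{\cO_x})$. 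By definition of delta distributions along $\unit$ twisted by this bundle,
\[
(\delta^c_{\D^n})_x \;\cong\; \Ind^{\widehat{\gl_n(\D_{\K_x})}}_{\gl_n(\D_{\cO_x}) \oplus \C c}\; \C_c.
\]
On the other side, $\Winfi^c(\gl_n)_x$ is, by its standard construction, the analogous induced module for the Kac-Peterson-Radul central extension of $\gl_n(\D_{\K_x})$. Once the two central extensions are matched, the two vacuum modules are canonically identified, and the OPE's agree by the rigidity of induced vertex operator structures.

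The main obstacle is precisely the identification of the two central extensions: the geometric one arising from the determinantal line bundle on the moduli of $\D^n$-bundles, and the algebraic $\cW$-cocycle. This is the $\D$-module analog of the classical identification of the determinantal extension of $\GL_n(\K_x)$ with the Kac-Moody cocycle. The cleanest route is to invoke the Kac-Peterson-Radul result that the relevant continuous second cohomology of $\gl_n(\D_{\K_x})$ is one-dimensional, which forces the two cocycles to be proportional; the proportionality constant is then pinned down by evaluating both on an explicit generator pair---say, commutators of $t^{-1}$ and $\del$ supported at a single point---together with the chosen normalization of the level $c$. This cocycle matching yields the desired isomorphism.
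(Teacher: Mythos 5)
Your skeleton---compute the fiber of $\delta^c_{\D^n}$ at $x$ as the vacuum module induced from $\D(\cO_x)\otimes\gl_n$ up to a central extension of $\D(\K_x)\otimes\gl_n$, then match it with the fiber of $\Winfi^c(\gl_n)$---is the same as the paper's direct proof, and your fiber computation agrees with the paper's identification $\delta_{\D^n}|_x\cong U(\D(\K_x)\otimes\gl_n)\otimes_{U(\D(\cO_x)\otimes\gl_n)}\C$. Your handling of the central extension (one-dimensionality of the continuous second cohomology plus a normalization check) is a legitimate alternative to the paper's route, which instead uses the dense embedding of $\D(\K_x)$ into the Tate endomorphisms $\gl(\K_x)$ and the restriction of the Tate/determinantal extension. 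But your opening reduction is already problematic: the theorem is stated for an arbitrary smooth curve $X$, where ``translation-equivariance'' has no meaning, so the Beilinson--Drinfeld dictionary between translation-equivariant chiral algebras on $\mathbb{A}^1$ and vertex algebras does not apply as invoked. To descend a single-fiber identification to an isomorphism of chiral algebras on general $X$ you would need equivariance of that identification under the Harish-Chandra pair of formal coordinate changes; the paper sidesteps this entirely by working with the factorization data over all $X^I$.

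The more serious gap is the assertion that ``any identification of underlying Fock spaces that respects the Harish-Chandra structure will automatically be a vertex algebra isomorphism.'' This assumes exactly what must be proved. The chiral bracket on $\delta^c_{\D^n}$ is defined by the factorization gluing of $\delta^{\{1,2\}}_{\D^n}$ along the diagonal in $X^2$, whereas your Lie algebra action is defined by the one-point geometric action on $\Gr_{\D^n}(x)$; a module isomorphism intertwining the latter says nothing a priori about the former. The substance of the theorem is precisely that the restriction of the chiral bracket to $j_*j^*(\gl(\D^n)\boxtimes\delta_{\D^n})$ coincides with the action map. The paper establishes this by (i) extending the one-point action to a factorization action of $\wt{\gl}(\D)^I$ on $\Gr_\D^I$ over every $X^I$ (Proposition \ref{fact action}), (ii) deducing that $\gl(\D)$ acts by derivations of $\delta_\D$ from the compatibility of this multi-point action with the diagonal gluing, and (iii) comparing the bracket with the action via the unit axiom, after which the map $\Winfi\to\delta_\D$ exists and is checked to be an isomorphism on fibers. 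Without some version of this multi-point construction, ``rigidity of induced vertex operator structures'' has nothing to bite on, and your argument does not close.
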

\noindent
As we then explain in Section \ref{discussion}, the factorization structure of our ad\`elic Grassmannian thus
encodes both the infinitesimal symmetries and the Hecke
modifications of 
$\D$-bundles: that is, the $\Winfi$ vertex algebra
and the B\"acklund-Darboux 
transformations \cite{BHY1,BHY3}
of the KP hierarchy
(and, more generally,
the $\Winfi(\gl_n)$-algebras 
and B\"acklund-Darboux transformations  
of the multicomponent KP hierarchies).
In other words,
the factorization structure on the 
ad\`elic Grassmannian unites the vertex operators, B\"acklund transformations and
additional symmetries of the KP hierarchy \cite{vM} in a single geometric structure.
 The embedding of 
the $GL_n$ Beilinson-Drinfeld Grassmannian into the rank $n$ ad\`elic 
Grassmannian globalizes the inclusion of the Kac-Moody algebra
$\wh{\gl}_n$ into $\Winfi(\gl_n)$.   
Section \ref{discussion} also contains an extended explanation of the
interaction between our picture on the one hand and the free fermion
theory and the Krichever construction on the other, realizing $\Winfi$
orbits on the Sato Grassmannian, and thereby the Orlov-Schulman additional symmetries
of the KP hierarchy,
 via moduli of $\D$-bundles or cusp
line bundles.
We close with a discussion of the significance of the factorization space $\Gr_P$ for 
$\cW$-geometry.

For related studies of (and background on) the geometry of
$\D$-bundles we refer the reader to \cite{cusps,solitons,BGN}, which
originated as an attempt to understand geometrically and generalize
the relations between bispectrality, the ad\`elic Grassmannian, ideals
in the Weyl algebra and Calogero-Moser spaces first uncovered and explored in work of Wilson and Berest-Wilson \cite{Wilson bispectral,
  Wilson CM, BW automorphisms, BW ideals}. In \cite{cusps}, the
Cannings-Holland description of $\D$-modules on
curves and their cuspidal quotients is revisited from a more
algebro-geometric viewpoint (and the Morita equivalence of Smith-Stafford
generalized to arbitrary dimension).
In \cite{solitons}, two constructions of KP solutions from $\D$-bundles
are explained---one (which is most relevant to this paper) related to
line bundles on cusp curves and the other related to Calogero-Moser systems. 
(This description of moduli spaces of $\D$-bundles or ideals in $\D$ as Calogero-Moser
spaces is established for arbitrary curves in \cite{BGN}.)   As explained in
\cite{solitons}, in genus zero the two constructions are evidently
exchanged by the Fourier transform and thus, by \cite{BW automorphisms}, by Wilson's bispectral involution \cite{Wilson bispectral}. 
However, in higher genus the constructions are
completely independent.

\subsection{Acknowledgments}
The authors are grateful to Edward Frenkel for many helpful
discussions, in particular on the subject of $\cW$-geometry.  The
present paper grew out of discussions that began in 2002; during the
paper's gestational period, the authors have been supported by MSRI
postdoctoral fellowships, NSF postdoctoral fellowships, and NSF
grants.  DBZ is currently supported by NSF CAREER award DMS-0449830,
and TN by NSF award DMS-0500221.

\section{$\D$-bundles}\label{D-bundles}

In this section we review the definition and features of $\D$-bundles on a smooth complex algebraic curve $X$, following ideas of
Cannings-Holland \cite{CH ideals, CH cusps} as they are explained in \cite{cusps}. 
Let $\D=\D_X$ denote the sheaf of differential
operators on $X$ (which can be viewed as functions on the quantization of the
cotangent bundle $T^*X$). 

\begin{defn}
A {\em $\D$-bundle} $M$ on $X$ is a locally projective
coherent right $\D_X$-module.
\end{defn}

Equivalently, a $\D$-bundle is a {\em torsion-free} coherent right $\D_X$-module (since the sheaf
of algebras 
$\D_X$ locally has homological dimension one, \cite[Section 1.4(e)]{SS}).
Moreover, $\D_X$ possess a skew field of fractions
(see \cite[Section 2.3]{SS}), from
which it follows that any $\D$-bundle has a well-defined
rank. Thus $\D$-bundles can be viewed alternatively as vector bundles or as
torsion-free sheaves on the
quantized cotangent bundle of $X$.

Some obvious examples of $\D$-bundles are the locally free (or induced) rank $n$ 
$\D$-bundles, of the form $M=V\ot \cD$ for a rank $n$ vector bundle $V$ on $X$.
In general, however, $\D$-bundles of rank $1$ are not locally free (isomorphic to
$\cD$), 
but only {\em generically} free, behaving more like
the ideal sheaves of a collection of points on an algebraic surface. 
Note that every right ideal
in $\D_X$ is torsion-free, hence a $\D$-bundle of rank $1$ (conversely 
any rank one $\D$-bundle may locally
be embedded as a right ideal in $\D_X$).

For example, let $X=\aline$, so that $\D_X$ has as its ring of
global sections the Weyl algebra $\C\langle z,\del\rangle/\{\del
z-z\del=1\}$.  The right ideal $M_0$ generated by $z^2$ and $1-z\del$
agrees with $\D$ outside of $0$ but is not locally free near $z=0$.
Let 
\bd
W(M_0) = 
\{\theta(f) \,|\, \theta \in M_0, f\in \boldc[z]\}\subset \boldc[z].
\ed 
Then $W(M_0) = \boldc[z^2, z^3]$, the subring of $\boldc[z]$ generated by $z^2$ and $z^3$; this 
 is isomorphic to the coordinate ring of the cuspidal cubic
curve $y^2=x^3$.  Furthermore, $M_0 = \D(\boldc[z], W(M_0))$, the space of differential operators that take $\boldc[z]$ into
$W(M_0)$.  

\subsection{Grassmannian parametrization}
In general, to describe $\D$-modules it is convenient to extract linear algebra data
using the de Rham (or Riemann-Hilbert) functor, which (on right $\D$-modules)
takes $M$ to the sheaf of $\C$-vector spaces which is 
its quotient by all total derivatives, or formally
$$M\mapsto h(M):=M\ot_\D \cO_X.$$ Note that if $M=V\ot \D$ is induced,
we recover the underlying vector bundle $V=h(M)$ in this fashion (though only
as a sheaf of $\C$-vector spaces rather than as an $\cO_X$-module).  In the example of $M_0$ given above,
$h(M_0) = W(M_0)$.  

We can parametrize $\D$-bundles by choosing generic
trivializations: given a $\D$-bundle $M$ we can embed $M$ into
$n=\on{rk}M$ copies of rational differential operators,
$M\hookrightarrow \D^n(K_X)$, so that $M\ot K_X=\D^n(K_X)$.  The
module $M$ is then determined by the finite collection of points $x_i$
at which $M$ differs from $\D_X^n$, and choices of subspaces of $n$
copies of Laurent series ${\mathcal K}_{x_i}^n$ at $x_i$.  This is
succinctly explained in \cite[Section 2.1]{chiral}: $\D$-submodules of
a $\D$-module $N$ that are cosupported at a point $x\in X$ are in
canonical bijection (via the de Rham functor $h$) with subspaces of the
stalk of the de Rham cohomology $h(N)_x$ at $x$ that are open in a
natural topology. In our case, we obtain collections of linearly
compact open subspaces (or $c$-lattices, in the terminology of
\cite{chiral}) of $n$ copies of Laurent series ${\mathcal K}_{x_i}^n$
at $x_i$ with respect to the usual (Tate) topology, i.e., subspaces
commensurable with $n$ copies of Taylor series.  These subspaces
correspond to points of the ``thin Sato Grassmannian''
$\Gr(\cK_{x_i}^n)$ (not to be confused with the complementary original
or ``thick'' Sato Grassmannian, see Section \ref{discussion}).

\subsection{Cusps} $\D$-bundles can alternatively be described  
by torsion-free sheaves on cuspidal curves.  Namely, given a
$\D$-module $M$ with a generic trivialization (identification with
$\D^n$), we can find a subring (or rather subsheaf of rings)
$\cO_Y\subset \cO_X$ whose left action on $\D^n(K_X)$ (by right
$\D$-automorphisms) preserves $M$. Equivalently, the $\C$-subsheaf
$h(M)\subset h(\D(K_X))$ is preserved by a ring $\cO_Y\subset \cO_X$,
which differs from $\cO_X$ only at the finitely many singularities
$x_i$ of the embedding. Thus $h(M)$ defines a torsion-free sheaf $V_Y$
on the cuspidal curve $Y=\on{Spec}\cO_Y$, which has $X\to Y$ as a
bijective normalization.

In fact, as was shown by Smith and Stafford \cite{SS} and generalized to arbitrary
dimension 
in \cite{cusps}, passing from $X$ to such a {\em cuspidal quotients}
$Y$ doesn't change the category of $\D$-modules: the sheaves of rings $\D_X$ and
$\D_Y$ are Morita equivalent. Thus, given a
torsion-free sheaf $V_Y$ on $Y$ we can define a torsion-free
$\D_Y$-module $M_Y=V_Y\ot \D_Y$ by induction, and transport it to
obtain a $\D$-bundle $M$ on $X$. Moreover, this process of
``cusp-induction'' reverses the above procedure $M\mapsto V_Y$. 

As a result, we obtain a geometric reinterpretation of the linear algebra data
classifying $\D$-bundles. Every $\D$-bundle arises by cusp-induction
from a torsion-free sheaf on a cuspidal quotient $X\to Y$, but $Y$ is
not unique. We can consider $M$ as associated to smaller and smaller
subsheaves $\cO_{Y'} \subset\cO_Y\subset\cO_X$, introducing deeper and
deeper cusps in the curves $X\to Y\to Y'$. The collection of
$\D$-bundles on $X$ is obtained as a direct limit over these deepening
cusp curves of the collections of torsion-free sheaves. Under this
limit the geometry of $X$ ``evaporates'' and we are left with a
reinterpretation of the linear algebra data above, characterizing a
class of $\C$-sheaves on $X$ which arise from torsion-free sheaves on
some cusp quotient.

\section{Factorization}\label{factorization}
In this section we study the factorization space structure on the
symmetries of a $\D$-bundle. Factorization spaces were introduced in
\cite{chiral}; we refer the reader to \cite[Section 20]{book} for a
leisurely exposition. In Section \ref{factorization space}, we
establish a factorization space structure for $\D$-bundles on a curve.
As we explain in Section \ref{chiral algebra}, standard constructions
then produce a chiral algebra which acts infinitesimally simply
transitively near the unit section of our factorization space $\Gr_P$.
In Section \ref{levels}, we describe how to twist this chiral algebra
by the determinant line bundle to obtain a family of chiral algebras
at different levels.

\subsection{The Factorization Space $\Gr_P$}\label{factorization space}
Let $X$ denote a smooth complex curve. We usually fix a $\D$-bundle
$P$ in what follows.

Recall the definition of a {\em factorization space} or {\em
  factorization monoid} from \cite[Definition 2.2.1]{KV}.  We will
define a factorization space, the {\em $P$-ad\`elic Grassmannian}
$\Gr_P$ of $X$, as follows.  For each finite set $I$ we define
$\Gr_P^I(S)$, for a scheme $S$, as a set over $X^I(S)$, the set of
maps $S\xrightarrow{\phi} X^I$.  Such a map $\phi$ defines a divisor
$D\subset S\times X$.  Then $\Gr_P^I(S)$ is the set of pairs $(M,
\iota)$ consisting of:
\begin{enumerate}
\item An $S$-flat, locally finitely presented right $\D_{X\times
    S/S}$-module $M$, such that the restriction $M|_{X\times\{s\}}$ to
  each fiber is torsion-free (equivalently, locally projective).
\item An isomorphism $\iota: M|_{X\times S\smallsetminus D}
  \xrightarrow{\simeq} P|_{X\times S\smallsetminus D}$ such that the
  composite $M\rightarrow M|_{X\times S\smallsetminus D}
  \xrightarrow{\iota} P|_{X\times S\smallsetminus D}$ is injective
  with $S$-flat cokernel.
\end{enumerate}

\begin{remark}\label{Ran space}
As we have mentioned in the introduction, our ad\`elic Grassmannian is not the same as the one introduced by Wilson 
\cite{Wilson bispectral, Wilson CM}.  Indeed, our space differs from Wilson's in two significant respects.  One of these 
is that Wilson's space is closer to the colimit $\underset{\longrightarrow}{\lim}_I \Gr_P^I$ of spaces in our system.  This
colimit naturally lives 
 (see \cite{chiral} or \cite{book}) over the colimit $\underset{\longrightarrow}{\lim}_I X^I$ of finite products of the curve, which is
known as the {\em Ran space} ${\mathcal Ran}(X)$ of $X$.   The space ${\mathcal Ran}(X)$ 
is a contractible topological space that is not naturally by itself an object of algebraic geometry: it is better to work with it via the 
system $\{X^I\}_I$, and hence with the factorization space $\Gr_P^I$.  

The second difference is that Wilson identifies the spaces $h(M_1)$ and $h(M_2)$ associated to two $\D$-bundles $M_1$ and $M_2$
on ${\mathbf A}^1$ if they are identified by an element of $\boldc(x)^{\times}$, whereas we keep the two separate.  An alternative
to our approach, closer in spirit to Wilson's, would be to take the factorization subspace of $\Gr_P$ parametrizing those $M$ which ``have index zero with respect to $P$ at
every point of $X$.'' 
\end{remark}

Suppose $V\otimes\D$ is an induced $\D$-bundle.  
  Standard arguments using finite generation  show:
\begin{lemma}\label{finiteness lemma}
Suppose $(M,\iota)\in \Gr_{V\otimes\D}^I(S)$.  Then, locally on $S$, there exists an
integer $k$ such that 
$V(-kD)\otimes\D \subset M \subset V(kD)\otimes\D$.
\end{lemma}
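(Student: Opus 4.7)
The plan is to prove the upper and lower bounds separately. Both rely on the fact that, via condition~(2) in the definition of $\Gr^I_{V\otimes \D}$, one has an injection
$$
M \;\hookrightarrow\; j_*\bigl((V\otimes \D)|_{X\times S\setminus D}\bigr) \;=\; \varinjlim_{k\geq 0}\,V(kD)\otimes\D,
$$
where $j\colon X\times S\setminus D\hookrightarrow X\times S$ is the open complement. The right-hand identification uses that $D$ is an effective Cartier divisor, that $\D$ is the filtered colimit of its coherent subsheaves of operators of bounded order, and that $j_*j^*$ commutes with these filtered colimits.

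For the upper bound, local finite presentation of $M$ gives finitely many local generators in a neighborhood of any point of $X\times S$. Each generator, viewed through the embedding above, lies in some $V(k_iD)\otimes \D$; taking $k=\max_i k_i$ then gives $M\subset V(kD)\otimes \D$ on that neighborhood. To upgrade from ``locally on $X\times S$'' to ``locally on $S$'', I would use that $D\to S$ is proper (since $D$ is a finite disjoint union of graphs of sections $S\to X$): after shrinking $S$, a finite affine cover of a neighborhood of $D$ in $X\times S$ suffices, while outside $D$ the inclusion $M=V\otimes \D\subset V(kD)\otimes \D$ holds tautologically via $\iota$.

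For the lower bound, fix $k$ as above and set $Q=V(kD)\otimes\D/M$; this quotient is set-theoretically supported on $D$, since $M$ agrees with $V\otimes \D$ outside $D$. Let $\overline V$ denote the image of the composition
$$
V\;\subset\;V\otimes \D\;\subset\;V(kD)\otimes \D\;\twoheadrightarrow\;Q.
$$
Since $\overline V$ is a quotient of $V$, it is $\cO_{X\times S}$-coherent, and it is supported on $D$; any such sheaf is annihilated by $I_D^{k'}$ for some $k'$ (locally on $S$, again using that $D\to S$ is proper). This says $V\cdot I_D^{k'}\subset V\cap M\subset M$, i.e., $V(-k'D)\subset M$ as subsheaves of $V(kD)\otimes \D$. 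Since $M$ is a right $\D$-submodule and $V(-k'D)\otimes \D$ is the right $\D$-submodule of $V(kD)\otimes \D$ generated by $V(-k'D)$, we conclude $V(-k'D)\otimes \D\subset M$.

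The main obstacle is really just the bookkeeping needed to choose the constants $k$ and $k'$ uniformly locally on $S$ rather than only locally on $X\times S$; this is handled in both bounds by properness of $D$ over $S$. Conceptually, the upper bound is pure local finite presentation of $M$, while the lower bound is local finite presentation of $V$ together with the observation that its image in the quotient $Q$ is $\cO$-coherent and supported on $D$, hence killed by a power of $I_D$.
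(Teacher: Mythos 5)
Your argument is correct and follows essentially the same route as the paper's own (sketched) proof: the upper bound from local finite presentation of $M$ forcing the embedding $M\hookrightarrow j_*\bigl((V\otimes\D)|_{X\times S\setminus D}\bigr)$ to factor through some $V(kD)\otimes\D$, and the lower bound from the quotient $V(kD)\otimes\D/M$ being supported on $D$ so that a finitely generated (here, $\cO$-coherent) piece of it is killed by a power of the ideal of $D$. Your explicit use of the $\D$-submodule property to pass from $V(-k'D)\subset M$ to $V(-k'D)\otimes\D\subset M$, and of properness of $D\to S$ to make the constants uniform locally on $S$, just makes precise what the paper leaves implicit.
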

For any $\D$-bundle $P$, the space $\Gr_P$ is a factorization space:

\begin{prop}\label{factorization prop}
The functors $\Gr_P^I$ assemble into a factorization monoid over $X$ in the sense of
\cite[Definition 2.2.1]{KV}.
\end{prop}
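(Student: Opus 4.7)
\emph{Proof proposal.} The plan is to verify the three axioms of a factorization monoid from \cite[Definition 2.2.1]{KV}: (a) equivariance under surjections $\pi \colon I \twoheadrightarrow J$, i.e.\ compatibility with the diagonal $X^J \hookrightarrow X^I$; (b) factorization of $\Gr_P^{I_1 \sqcup I_2}$ as $\Gr_P^{I_1} \times \Gr_P^{I_2}$ over the disjoint locus in $X^{I_1} \times X^{I_2}$; and (c) a unit section. The central observation is that $(M, \iota) \in \Gr_P^I(S)$ depends on the map $\phi \colon S \to X^I$ only through the underlying closed subset $|D| = \bigcup_{i} \Gamma_{\phi_i} \subset S \times X$, since all defining conditions involve $D$ only through its complement in $X \times S$. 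Axiom (a) is then immediate, because $\phi \colon S \to X^J$ and $\Delta_\pi \circ \phi \colon S \to X^I$ produce divisors with the same support; the unit section of (c) is $X \to \Gr_P^{\{*\}}$ selecting the trivial modification $(P, \id_P)$.

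For the factorization axiom (b), fix $I = I_1 \sqcup I_2$ and let $U \subset X^I$ be the open locus where the $I_1$- and $I_2$-coordinates are pairwise disjoint. Over $U$ the incidence divisor splits as $D = D_1 \sqcup D_2$ with $D_1 \cap D_2 = \emptyset$, so that $\{(X \times S) \smallsetminus D_1,\, (X \times S) \smallsetminus D_2\}$ is an open cover of $X \times S$. Given $(M,\iota) \in \Gr_P^I(S)$ over $U$, I build $M_1$ by gluing $M|_{(X\times S)\smallsetminus D_2}$ to $P|_{(X\times S)\smallsetminus D_1}$ along the overlap $(X\times S)\smallsetminus D$ via $\iota$, with $\iota_1$ the tautological trivialization off $D_1$; the construction of $M_2$ is symmetric. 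Conversely, given $((M_1,\iota_1),(M_2,\iota_2))$, glue $M_1|_{(X\times S)\smallsetminus D_2}$ to $M_2|_{(X\times S)\smallsetminus D_1}$ along the overlap, where both become $P$. These two constructions are visibly mutually inverse, natural in $S$, and compatible with (a) and with refinements of partitions.

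The principal technical point is checking that the glued modules satisfy the defining conditions of $\Gr_P$. Local finite presentation, fiberwise torsion-freeness, and $S$-flatness of $M_1, M_2$ (resp.\ of $M$) are automatic since each glued module is locally isomorphic to $M$ or $P$ (resp.\ to $M_1$ or $M_2$), and these properties are local on $X \times S$. Injectivity of $M_1 \to P|_{(X\times S)\smallsetminus D_1}$ reduces to the assertion that $M$ has no sections supported on $D_1 \subset D$, which is immediate from the injectivity hypothesis on $M$. The only point requiring input beyond pure gluing is the $S$-flatness of the cokernel: using Lemma \ref{finiteness lemma} to embed $M \subset P(kD) = P(kD_1) \otimes_{\cO} \cO(kD_2)$ locally on $S$, the $S$-flat cokernel $P(kD)/M$ is set-theoretically supported on $D_1 \sqcup D_2$, so splits canonically as a direct sum of $S$-flat summands supported on $D_1$ and on $D_2$; these summands are precisely the cokernels for $M_1$ and $M_2$, which completes the verification.
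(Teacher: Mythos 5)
Your verification of the factorization axioms is careful and essentially correct --- indeed more detailed than the paper's own treatment, which dismisses parts (a) and (b) of \cite[Definition 2.2.1]{KV} as ``standard'' with only a brief indication (independence of multiplicities, locality of modifications). Your gluing construction over the disjoint locus, and the splitting of the $S$-flat cokernel into summands supported on $D_1$ and $D_2$, is a reasonable way to make that sketch precise.

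However, there is a genuine gap: being a factorization monoid in the sense of \cite[Definition 2.2.1]{KV} is not only a matter of the factorization isomorphisms and the unit section. The definition also requires that each $\Gr_P^I$ be an ind-scheme of ind-finite type over $X^I$, that it be \emph{formally smooth} over $X^I$, and that it carry a \emph{formally integrable connection} along $X^I$. These three points are where essentially all of the work in the paper's proof lies, and your proposal does not address any of them. Concretely: (i) the ind-structure is obtained from Lemma \ref{finiteness lemma} by letting $\Gr_P^I(n)$ be the locus where $P(-nD)\subset M\subset P(nD)$ (which already requires making sense of $P(kD)$ for a non-induced $P$, via the cusp curve from which $P$ is induced), and one identifies $\Gr_P^I(n)$ with a closed subscheme of a relative Grassmannian of subspaces of $H^0\big(h(P(nD)/P(-nD))\big)$ using the de Rham functor; (ii) formal smoothness is a deformation-theoretic statement, proved in the paper by extending the family of cusp curves $Y$ over a nilpotent thickening $S\subset S'$ and deforming the quotient $h(P(nD))/\cF$, which is a direct sum of skyscrapers; (iii) the connection is constructed by pulling $(M,\iota)$ back along $X\times S\times A\to X\times S$ for Artinian $A$, using that the complement of the thickened divisor is a product. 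Without (i)--(iii) the proposition is not established; your argument shows only that the $\Gr_P^I$ form a unital factorization system of set-valued functors.
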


\begin{proof}
  We need to show: for any set $I$, $\Gr_P^I$ is naturally an
  ind-scheme of ind-finite type, formally smooth over $X^I$, and comes
  equipped with a formally integrable connection over $X^I$.  The
  ``factorization'' properties themselves (parts (a) and (b) in the
  definition from \cite{KV}) are standard. Briefly, we need to see
  that the fiber of $\Gr_P^I$ over an $I$-tuple $x_I$ of ($S$-)points
  of $X$ depends only on the support of the tuple---not the
  multiplicities---and factorizes as a product for every disjoint union
  decomposition of the tuple. The first property is automatic for any
  functor defined in terms of data outside of $x_I$. The second
  follows from the fact that modifications at points $x_I$ are local,
  and so modifications away from disjoint points can be glued together
  (explicitly, this follows from the description of modifications by
  collections of points of Grassmannians, as in the previous section).

  Choose an induced $\D$-bundle $V\otimes \D$ and containments
  $V(-E)\otimes \D \subseteq P\subseteq V(E)\otimes \D$ (which is
  possible by Lemma \ref{finiteness lemma}).
  Then $P$ is cusp-induced from a cusp curve $Y$ with homeomorphism
  $X\rightarrow Y$ that fails to be an isomorphism only at the support
  of $E$.  Suppose $Y$ has a cusp located ``under'' a point $p$ in
  $X$.  If $z$ is a uniformizer at $p$, then for $n$ sufficiently
  large, $z^n$ lies in $\theo_Y$.  It follows that we can make sense
  of $\D$-module inclusions $\theo(-np)\otimes P\subset P \subset
  \theo(np)\otimes P$ (say, as subsheaves of $P|_{X\smallsetminus
    E}$).  Write $P(kD) = \theo(kD)\otimes P$ when this makes sense.
  We then give $\Gr_P$ the ind-structure coming from Lemma
  \ref{finiteness lemma}: we let $\Gr_P^I(n)$ denote the subset of
  $\Gr_P^I$ of pairs $(M,\iota)$ for which we have $P(-nD) \subset
  M\subset P(nD)$.
  
  There are now many ways to see that $\Gr_P^I(n)$ is a scheme of
  finite type.  For example, applying the de Rham functor to $M/P(-nD)
  \subset P(nD)/P(-nD)$ and applying Theorem 5.7 of \cite{cusps}, we
  see that choosing $M$ is equivalent to choosing a sheaf of vector
  subspaces of $h(P(nD)/P(-nD))$.  The functor of such choices is a
  closed subscheme of the relative Grassmannian of subspaces of
  $H^0\big(h(P(nD)/P(-nD))\big)$ (this is essentially the
  Cannings-Holland picture of the ad\`elic Grassmannian, as explained
  and used to great effect in \cite{Wilson CM}).
  
  It remains to show that $\Gr_P$ is formally smooth and to describe
  the formally integrable connection over $X^I$.  We first show that
  $\Gr_P^I \rightarrow X^I$ is formally smooth.  Let $(M,\iota)$ be an
  object of $\Gr_P^I(S)$ for a scheme $S$. We may assume that
  $P(-nD)\subset M \subset P(nD)$ and that $M$ is, in the terminology
  of \cite{cusps}, cusp-induced from a family $\cF$ of
  $\theo_Y$-modules, where $Y$ is a family of cusp curves over $S$
  given by a sheaf of sub-algebras $\theo_Y\subset \theo_{X\times S}$.
  We then have inclusions of $\theo_Y$-modules $h(P(-nD))\subset \cF
  \subset h(P(nD))$.  
  
  Now, given a nilpotent thickening $S\subset S'$
  and a divisor $D'$ on $S'\times X$ that is a nilpotent thickening of
  the divisor $D$ to which $(M,\iota)$ corresponds, we need to extend
  $(M,\iota)$ to a pair $(M',\iota')$ over $X\times S'$.  To do this,
  we first extend $Y$ to a family $Y'$ of cusp curves over $S'$ ``with
  cusps determined by $D'$,'' i.e. cusps whose depths are determined
  by $D'$.  It then suffices to deform the map $h(P(nD))\rightarrow
  h(P(nD))/\cF$: the kernel of the cusp-induction of the deformed map
  will give $M'$, and the inclusion in $h(P(nD'))\otimes \D$ will give
  $\iota'$.  But, by construction, as an $\theo_Y$-module
  $h(P(nD))/\cF$ is a direct sum of skyscrapers: more precisely, it is
  isomorphic to the direct image to $Y$ of $\oplus_{i\in I}
  \theo_{p_i}$, where $p_i$ is the $i$th section of $X\times
  S\rightarrow S$ determined by the map $S\rightarrow X^I$.  So, as
  our deformation of $h(P(nD))/\cF$ we can take $\oplus_{i\in I}
  \theo_{p_i}$ as an $\theo_{Y'}$-module, and then deformations of
  this sum of skyscrapers, as well as the map from $h(P(nD'))$,
  certainly exist.  This proves the existence of $(M',\iota')$.  Hence
  $\Gr_P^I\rightarrow X^I$ is formally smooth.  (Note that an
  alternative proof of this assertion uses the formally transitive
  action of a Lie algebra of matrix differential operators, as in the
  next section.)

  Finally, we describe the formally integrable connection over
  $X^I$---note, however, that the existence of such a connection follows formally (as in
  \cite[20.3.8]{book}) from the existence of a unit for the
  factorization space.  This follows \cite[Section 5.2]{Ga}.
  Namely, given an Artinian scheme $A$, a pair $(M,\iota)$
  parametrized by a scheme $S$, and a map $S\times A \rightarrow X^I$
  determining a divisor $D_A$, we pull $M$ back along the projection
  $X\times S \times A \rightarrow X\times S$.  Since $X\times S\times
  A\smallsetminus D_A = (X\times S\smallsetminus D)\times A$, this
  pullback comes equipped with an isomorphism to $P$ over $X\times
  S\times A\smallsetminus D_A$, as desired.  This canonical lift of
  infinitesimal extensions gives our connection.
\end{proof}

Suppose $P=V\otimes \D$ is an induced $\D$-bundle.  Let $\on{Gr}_V^I$ denote the
Beilinson-Drinfeld Grassmannian associated
to $V$: over a divisor $D\subset X$, this parametrizes vector bundles $W$ equipped
with an isomorphism 
$W|_{X\smallsetminus D}\cong V|_{X\smallsetminus D}$.  Such an isomorphism induces a
$\D$-bundle isomorphism
$W\otimes \D|_{X\smallsetminus D}\cong P|_{X\smallsetminus D}$, thus giving an
embedding of unital factorization spaces:
\begin{equation}\label{BD in Grad}
\on{Gr}_V^I\hookrightarrow \Gr_P^I.
\end{equation}

We note that the relative tangent space of $\Gr_P^I\rightarrow X^I$ at $(M,\iota)$
is given by
\begin{equation}\label{tangent space}
T_{(M,\iota)} (\Gr_P^I/X^I) = \Hom_\D(M, P(\infty\cdot D)/M)
\end{equation}  
by the same analysis as one uses for a Quot-scheme.  Given a divisor
$D\subset X$, let $\on{supp}(D)$ denote the support of the divisor $D$
and $\widehat{X}_D$ the formal completion of $X$ along $D$.  For a
quasi-coherent sheaf $\cF$, let $\cF_\eta$ denote the direct sum of
stalks at points of $\on{supp}(D)$.
\begin{lemma}\label{tangent spaces}
  There is a natural surjection: \bd \underline{\End}(P(\infty\cdot
  D))_\eta\twoheadrightarrow T_{(M, \iota)}(\Gr_\D^I/X^I).  \ed If, in
  addition, $P=\D^n$, this yields a surjection: \bd H^0(\D(\infty\cdot
  D)|_{\widehat{X}_D})\otimes {\mathfrak gl}_n\twoheadrightarrow
  T_{(M, \iota)}(\Gr_\D^I/X^I).  \ed In particular, if $D$ consists of
  a single point $x$, we get a surjection \bd \D(\cK_x)\otimes
  {\mathfrak gl}_n\twoheadrightarrow T_{(M,
    \iota)}(\Gr_\D^{\{\ast\}}/X).  \ed
\end{lemma}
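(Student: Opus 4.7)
The plan is to interpret both sides as direct sums over $p\in\supp(D)$ of local objects at $p$, and then at each such point to combine projectivity of $\D$-bundles with the Ore-localization description of $P(\infty\cdot D)$. Setting $N := P(\infty\cdot D)$, I would first invoke the tangent-space formula (3.2) to identify the target with $\Hom_{\D}(M, N/M)$. Since $M$ and $N$ agree on $X\sm D$, the quotient $N/M$ is supported on $\supp(D)$; this lets me decompose $\Hom_{\D}(M,N/M)$ canonically as a direct sum of its stalks at points of $\supp(D)$, matching the $\eta$-decomposition on the source. The natural map should be $\phi\mapsto [M\hookrightarrow N\xrightarrow{\phi} N\twoheadrightarrow N/M]$, and it then suffices to prove surjectivity at each $p\in\supp(D)$.

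For the local argument at such a $p$ with local coordinate $z$, I would proceed in two steps. First, applying $\Hom_{\D_p}(M_p,-)$ to the short exact sequence $0\to M_p\to N_p\to N_p/M_p\to 0$ gives a surjection $\Hom(M_p,N_p)\twoheadrightarrow \Hom(M_p, N_p/M_p)$, the obstruction $\Ext^1_{\D_p}(M_p,M_p)$ being zero because $M$ is a $\D$-bundle (Definition 2.1, so $M_p$ is locally projective) and $\D_p$ has homological dimension one. Second, I would show that restriction $\End_{\D_p}(N_p) \twoheadrightarrow \Hom_{\D_p}(M_p,N_p)$ is surjective by Ore-localizing: the powers of $z$ form a right Ore set in $\D_p$, and since $M$ and $N$ agree away from $p$ while $N$ is already $z$-localized along $D$, one has $N_p = M_p \otimes_{\D_p} \D_p[z^{-1}] = M_p[z^{-1}]$; the universal property of the Ore localization then extends any $\phi:M_p\to N_p$ uniquely to a right $\D_p$-linear $\widetilde\phi:N_p\to N_p$ restricting to $\phi$. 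Composing the two surjections and summing over $p\in\supp(D)$ establishes the first assertion of the lemma.

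For the refinement when $P=\D^n$, a direct computation of endomorphism rings suffices: at each $p$, $N_p = \D_p(\infty\cdot p)^{\oplus n}$ is free of rank $n$ over $\D_p(\infty\cdot p)$, so $\End_{\D_p}(N_p) = M_n(\D_p(\infty\cdot p)) = H^0(\D(\infty\cdot D)|_{\wh X_p})\otimes\gl_n$; summing over $p\in\supp(D)$ gives $H^0(\D(\infty\cdot D)|_{\wh X_D})\otimes\gl_n$ and hence the second surjection. In the case $I=\{\ast\}$, $D=\{x\}$ and $H^0(\D(\infty x)|_{\wh X_x})=\D(\cK_x)$ by definition of $\cK_x$ as the Laurent-series field at $x$, yielding the third.

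The main obstacle I anticipate is the Ore-localization step: one must check that extending a right $\D_p$-linear map across the non-commutative localization $\D_p \hookrightarrow \D_p[z^{-1}]$ really does produce a right $\D_p$-linear endomorphism of $N_p$. Once one confirms that $\{z^n\}$ is a right Ore set in $\D_p$ and uses the tensor-product description of the localization, this is automatic, but the non-commutativity---encoded in $[\partial_z,z]=1$---makes the verification worth spelling out more carefully than in the commutative setting.
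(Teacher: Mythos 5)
Your proposal is correct, and it rests on the same pillars as the paper's proof: the short exact sequence $0\to M\to P(\infty\cdot D)\to P(\infty\cdot D)/M\to 0$, local projectivity to kill an $\underline{\Ext}^1$, and the fact that the quotient is supported on $\on{supp}(D)$. The difference is in how you factor the surjection. Writing $N=P(\infty\cdot D)$, the paper applies $\uHom_\D(N,-)$ to the sequence, uses projectivity of $N$ to obtain $\underline{\End}(N)\too\uHom_\D(N,N/M)$, and then passes to $\Hom_\D(M,N/M)$ without comment; you instead route through $\Hom_\D(M,N)$, using projectivity of $M$ for the surjection $\Hom(M,N)\too\Hom(M,N/M)$ and the Ore localization $N_p=M_p\otimes_{\D_p}\D_p[z^{-1}]$ for the surjection $\End_{\D_p}(N_p)\too\Hom_{\D_p}(M_p,N_p)$. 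These are the two routes around the same square, and the localization fact you insist on spelling out --- that $\{z^k\}$ is an Ore set, that $N_p/M_p$ is $z$-power torsion, hence $\Hom_{\D_p}(M_p,V)\cong\Hom_{\D_p[z^{-1}]}(N_p,V)$ for any right $\D_p[z^{-1}]$-module $V$ --- is exactly the step the paper's ``the conclusions follow'' elides, so your version is if anything the more complete one. Two minor points: once $M_p$ is projective you do not also need the homological dimension of $\D_p$; and in upgrading the stalkwise surjection to one from $H^0(\D(\infty\cdot D)|_{\widehat{X}_D})\otimes{\mathfrak gl}_n$ you should remark that the action of $\underline{\End}(N)_\eta$ on $N/M$ factors through the formal completion, since $N/M$ is a colimit of modules killed by powers of the ideal of $D$.
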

\begin{proof}
  We use the short exact sequence: \bd 0\rightarrow M\rightarrow
  P(\infty\cdot D)\rightarrow P(\infty\cdot D)/M\rightarrow 0.  \ed
  Since $M$ and $P(\infty\cdot D)$ are locally projective, the sheaf
  Ext group $\underline{\Ext}^1_\D\big(P(\infty \cdot D),M\big)$
  vanishes, and we get a surjective map of sheaves \bd
  \uHom_\D\big(P(\infty\cdot D), P(\infty\cdot D)\big)\rightarrow
  \uHom\big(P(\infty\cdot D), P(\infty\cdot D)/M\big).  \ed Since
  $\uHom\big(P(\infty\cdot D), P(\infty\cdot D)/M\big)$ is supported
  along $\on{supp}(D)$, the conclusions follow.
\end{proof}

\subsection{Chiral Algebra}\label{chiral algebra}
The $\D$-bundle $P$, equipped with its canonical isomorphism $id$ with
$P$ over $X\smallsetminus D$ for any $D$, defines a ``unit'' section
$\unit^I: X^I\to\Gr_P^I$, compatible with the factorization
isomorphisms and preserved by the relative connection.  Let $\delta_P$
denote the $\Oo$-push-forward of the right $\D$-module
$\unit^1_!\omega_X$ on $\Gr_P^X=\Gr_P^{\{1\}}$, and $\delta_P^I$ the
push-forward of $\unit^I_!\omega_{X^I}$ from $\Gr_P^I$. The sheaves
$\delta_P^I$ are right $\D$-modules on $X^I$, and we let
${}^\ell\delta_P^I$ denote the corresponding left $\D$-modules.

\begin{corollary} The left $\D$-modules ${}^\ell\delta_P^I$ 
form a factorization algebra on $X$. In particular $\delta_P$ is a
chiral algebra.
\end{corollary}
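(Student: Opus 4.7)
The plan is to invoke the general Beilinson-Drinfeld machine (\cite[Section 3.4]{chiral}, \cite[Chapter 20]{book}), which produces a factorization algebra from any unital factorization space by pushing forward the $!$-extended dualizing sheaf of the unit section. All the ingredients have been assembled: Proposition \ref{factorization prop} supplies the factorization structure on $\Gr_P$ and formal smoothness over $X^I$, while the unit sections $\unit^I$ are defined via the canonical pair $(P,\mathrm{id})$.

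The key compatibility to record is that the unit sections respect factorization: for any surjection $\pi: J \twoheadrightarrow I$ of finite sets, the factorization isomorphism
\[
\Gr_P^J\big|_{U^{J/I}} \;\cong\; \prod_{i\in I} \Gr_P^{\pi^{-1}(i)}\big|_{U^{J/I}}
\]
over the disjoint locus $U^{J/I}\subset X^J$ carries $\unit^J$ to the product of the $\unit^{\pi^{-1}(i)}$, since the trivial modification of $P$ factors as a product of trivial modifications on the disjoint pieces. Applying $!$-extension of $\omega$ and then $\Oo$-push-forward to $X^J$ converts this into the required factorization isomorphism
\[
\delta_P^J\big|_{U^{J/I}} \;\cong\; \bigl(\boxtimes_{i\in I}\delta_P^{\pi^{-1}(i)}\bigr)\big|_{U^{J/I}},
\]
functorial in $\pi$ and in composition of surjections. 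The unit of the factorization algebra is the canonical embedding $\omega_X \hookrightarrow \delta_P$ induced by $\unit^1$. The ``in particular'' clause then follows from the standard equivalence between unital factorization algebras (of left $\D$-modules) and chiral algebras (of right $\D$-modules).

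The main point requiring care is the ind-scheme nature of $\Gr_P^I$: a priori, $\unit^I_!\omega_{X^I}$ and its $\Oo$-push-forward live in an ind-pro-category, and one must check that they descend to honest $\D$-modules on $X^I$ of the expected size. This is exactly where the formal smoothness established in Proposition \ref{factorization prop} enters: near the unit section, $\Gr_P^I$ looks formally like a section of an ind-pro-finite-dimensional vector bundle over $X^I$ (whose fiber is governed, via the identification of tangent spaces in Lemma \ref{tangent spaces}, by matrix differential operators), and $\unit^I_!\omega_{X^I}$ is the ind-system of distributions supported along that section. This is precisely the setting in which the Beilinson-Drinfeld formalism is designed to apply verbatim, so once the above compatibilities are spelled out no further argument beyond citing \cite{chiral} should be needed.
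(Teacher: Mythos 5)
Your proposal is correct and follows essentially the same route as the paper, whose entire proof is the citation ``This exactly follows Section 5.3.1 of \cite{Ga}''; you are simply spelling out the content of that standard construction (unit sections compatible with factorization, push-forward of $\unit^I_!\omega_{X^I}$, and the role of formal smoothness in controlling the ind-structure). The extra detail you supply is consistent with what Proposition \ref{factorization prop} provides and does not diverge from the paper's argument.
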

\begin{proof}
This exactly follows Section 5.3.1 of \cite{Ga}.
\end{proof}
\begin{remark}
  We will identify this chiral algebra with a $\cW$-algebra below in
  the case $P = \D^n$.  It would be interesting to investigate the
  structure of $\delta_P$ further in the case that $P$ is not locally
  free.
\end{remark}
\begin{prop} 
  Suppose $P = \D^n$.  The Lie algebra $\D(\K_x)\otimes {\mathfrak
    gl}_n$ acts continuously and formally transitively on the
  Grassmannian fiber $\Gr^{\{\ast\}}_{\D^n}(x)$, inducing an
  isomorphism $T_{\D^n}\Gr^{\{\ast\}}_{\D^n}(x)=\D(\K_x)/\D(\Oo_x)
  \otimes {\mathfrak gl}_n$.
\end{prop}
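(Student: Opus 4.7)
The plan is to produce the action from the natural left multiplication of $\gl_n$-valued rational differential operators on $\D^n(\K_x) = \D(\K_x)^{\oplus n}$, which commutes with the right $\D$-action and so preserves the right $\D$-module structure. Concretely, a pair $(M,\iota)$ in the fiber $\Gr^{\{\ast\}}_{\D^n}(x)$ is nothing but a $c$-lattice $\iota(M)\subset \D^n(\K_x)$, and an element $a\in \D(\K_x)\otimes\gl_n$ produces the first-order deformation whose tangent vector is the $\D$-module map
\bd
M\longrightarrow \D^n(\K_x)/M,\qquad m\longmapsto a\cdot\iota(m)\ \mathrm{mod}\ M.
\ed
This assignment is a Lie algebra homomorphism from $\D(\K_x)\otimes\gl_n$ to the Lie algebra of vector fields on $\Gr^{\{\ast\}}_{\D^n}(x)$; informally, it is the derivative at the identity of the obvious action of the ind-group scheme of invertible matrix differential operators on lattices via $g\cdot(M,\iota) = (g\cdot\iota(M),\mathrm{id})$.

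Continuity of the action is read off the ind-scheme structure from Lemma \ref{finiteness lemma}. Namely, the subscheme $\Gr^{\{\ast\}}_{\D^n}(k)$ consisting of $M$ with $\D^n(-kx)\subset M\subset\D^n(kx)$ is preserved by any element of $\D(\K_x)\otimes\gl_n$ with pole order at $x$ bounded in terms of $k$, while elements of sufficiently deep subalgebras $z^N\D(\Oo_x)\otimes\gl_n$ act trivially on $\Gr^{\{\ast\}}_{\D^n}(k)$ modulo a prescribed level of nilpotents. This verifies that the action is continuous for the Tate topology on $\D(\K_x)\otimes\gl_n$.

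Formal transitivity is immediate from the last statement of Lemma \ref{tangent spaces}: taking $D=\{x\}$, the derivative of the action is precisely the surjection
\bd
\D(\K_x)\otimes\gl_n \ \twoheadrightarrow\ T_{(M,\iota)}\bigl(\Gr^{\{\ast\}}_{\D^n}/X\bigr)
\ed
at every point of the fiber. The identification of this surjection with our Lie-algebra action is transparent from the proof of Lemma \ref{tangent spaces}, where the tangent vector attached to an endomorphism of $P(\infty\cdot D)=\D^n(\infty\cdot D)$ is exactly its restriction to $M$ composed with the projection onto $P(\infty\cdot D)/M$, i.e., left multiplication followed by the quotient.

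Finally, at the distinguished point $M=\D^n$ the kernel of the derivative is the set of $a\in \D(\K_x)\otimes\gl_n$ with $a\cdot\D^n\subset\D^n$, which is precisely $\D(\Oo_x)\otimes\gl_n$. Combining this with surjectivity yields the asserted isomorphism
\bd
T_{\D^n}\Gr^{\{\ast\}}_{\D^n}(x)\ \cong\ \bigl(\D(\K_x)/\D(\Oo_x)\bigr)\otimes\gl_n.
\ed
The only nontrivial step is the careful bookkeeping needed to formulate the topology: everything algebraic is dictated by Lemma \ref{tangent spaces} and the elementary observation that $\D(\Oo_x)$ is the stabilizer of the standard lattice $\D(\Oo_x)\subset\D(\K_x)$ in the scalar case. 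The matching between ``left multiplication'' and the surjection of Lemma \ref{tangent spaces} is the main conceptual point, but since both are constructed from the same short exact sequence $0\to M\to \D^n(\infty\cdot\{x\})\to \D^n(\infty\cdot\{x\})/M\to 0$, the agreement is formal.
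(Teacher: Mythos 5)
Your argument is correct, but it takes the ``$\D$-module side'' of a coin whose other face is what the paper actually prints. The paper's proof passes through the de Rham functor: it identifies the fiber with the thin Grassmannian $\Gr(\K_x^n)$ of $c$-lattices in $\K_x^n$, lets $\D(\K_x)\otimes{\mathfrak gl}_n$ act by its defining action as continuous endomorphisms of $\K_x^n$, identifies the tangent space at $\Oo_x^n$ with $\Hom_{\on{cont}}(\Oo_x^n,\K_x^n/\Oo_x^n)$, and then proves the key point directly: every continuous homomorphism factors through a map $(\Oo_x/\bm_x^k)^n\to(\bm_x^{-k}/\Oo_x)^n$ and all such maps are realized by differential operators. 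You instead stay with $\D$-submodules of $\D^n(\K_x)$, take the Quot-scheme tangent space \eqref{tangent space}, and outsource surjectivity to Lemma \ref{tangent spaces} (whose engine is the vanishing of $\underline{\Ext}^1_\D$ off a locally projective module), finishing with the elementary kernel computation $\{a : a\cdot\D^n\subset\D^n\}=\D(\Oo_x)\otimes{\mathfrak gl}_n$ at the basepoint. This is precisely the ``alternative proof'' the paper itself gestures at (in the proof of Proposition \ref{factorization prop} and again in Proposition \ref{fact action}, where formal transitivity is deduced from Lemma \ref{tangent spaces}); it has the advantage of giving formal transitivity at \emph{every} point of the fiber uniformly and of generalizing to arbitrary $P$, whereas the paper's route makes explicit the identification with the standard $\gl$-type action on the Sato-type Grassmannian, which is what is used later for the determinant line bundle and the comparison with $\Winfi$. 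Two small points of care: your continuity discussion is schematic (as is the paper's), and your parenthetical description of the integrated action $g\cdot(M,\iota)=(g\cdot\iota(M),\mathrm{id})$ glosses over the fact that $g$ need not preserve the generic identification with $P$ --- the honest statement, as in Proposition \ref{fact action}, is that the Lie algebra acts by infinitesimal automorphisms of the ambient module $\D^n(\ast x)$ and hence on its functor of cosupported submodules. Neither affects the correctness of the argument.
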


\begin{proof}
In the Grassmannian description of the fiber $\Gr_{\D^n}(x)=\Gr(\K_x^n)$, the action of
$\D(\K_x)\otimes {\mathfrak gl}_n$ comes from its defining action by continuous
endomorphisms of $\K_x^n$.
The tangent space to the Grassmannian $\Gr(\K_x^n)$ at $\Oo_x^n$ is identified with
$$\Hom_{\on{cont}}(\Oo_x^n,\K_x^n/\Oo_x^n)=\D(\K_x)/\D(\Oo_x)\otimes {\mathfrak
gl}_n,$$ since a
continuous homomorphism factors through a map $(\Oo_x/\bm_x^k)^n\to
(\bm_x^{-k}/\Oo_x)^n$ for $k$ sufficiently large, and all such
homomorphisms may be realized by differential operators (or
alternatively since
$\Hom_{\on{cont}}(\Oo_x^n,\K_x^n/\Oo_x^n)=\Hom_\D(\D^n,\K_x^n/\Oo_x^n\ot\D)$,
since a homomorphism on either side is automatically $\OY$-linear for
a deep enough cusp $Y$). The formal transitivity at other points follows similarly.
\end{proof}

It follows that $\D(\K_x)\otimes {\mathfrak gl}_n$ also acts on the space of delta
functions
$\delta_{\D^n}(x)$---we denote this action by
$$\mathsf{act}_x:\big(\D(\K_x)\otimes{\mathfrak gl}_n\big)
\ot\delta_{\D^n}(x)\to\delta_{\D^n}(x).$$
This action induces an isomorphism of $\D(\K_x)\otimes{\mathfrak gl}_n$-modules
\begin{equation}\label{delta as module}
\delta_{\D^n}|_x\cong U\big(\D(\K_x)\otimes {\mathfrak
gl}_n\big)\ot_{U\big(\D(\Oo_x)\otimes {\mathfrak gl_n}\big)}\C.
\end{equation}

\subsection{Levels}\label{levels}
Let $M$ be a coherent $\D_{X\times S/S}$-module on $X\times
S\stackrel{\pi}{\to}S$. We define a line bundle $\det(M)$ on $S$ as
follows. Consider the bounded complex
$R\pi_{\cdot}\on{DR}^{\cdot}(M)=R\pi_{\cdot}(M\dR\Oo)$ of coherent
$\Oo_S$-modules, and form its determinant line bundle
$$\det(M)=\det(R\pi_\cdot M\dR\Oo).$$
Note that if $S$ is smooth and
$M$ is in fact a right (absolute) $\D_{X\times S}$-module, then
$R\pi_*M=R\pi_\cdot M\dR\Oo\in\rmod-\D_S$ is the right $\D$-module
push-forward of $M$. Thus the determinant of de Rham cohomology line
bundle $\det(M)$ is the $\D$-module analogue of the determinant of
cohomology line bundles on moduli stacks of bundles.

Suppose that $M$ is a cusp-induced $\D$-module, so that $M\cong \Mbar\otY\DYX$
for a cuspidal quotient $\wb{\pi}:Y\to S$ of $X\times S\to S$ over $S$.
Then we have $\on{DR}^{\cdot}M=h(M)=\Mbar$, and $\det(M)$
is the determinant of cohomology
$$\det(M)=\det(R\wb{\pi}_*\Mbar)$$
of the $\Oo$-module push-forward of
$\Mbar$ to $S$.  It follows that the determinant line bundle,
restricted to a fiber $\Gr_\D^{\{\ast\}}(x)$, is naturally identified
with the ``Pl\"ucker'' determinant line bundle on the Grassmannian
$\Gr(\K_x^n)$.  The following proposition follows from the description
of $\det$ on $\Gr_{\D^n}^I|_{\xarr}$ as a tensor product of local
factors at the points of $\xarr$ (for an algebraic treatment of
factorization in the closely related context of $\epsilon$-factors,
see \cite{BBE}):

\begin{prop} The line bundle $\det$ on $\Gr_{P}$ has a natural factorization structure
  over the factorization structure of $\Gr_{P}$.
\end{prop}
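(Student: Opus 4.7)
The plan is to establish the factorization isomorphism $\det^{[I]}\vert_{U(I_1,I_2)}\simeq \det^{[I_1]}\boxtimes \det^{[I_2]}\vert_{U(I_1,I_2)}$ over the disjointness locus $U(I_1,I_2)\subset X^I$ (and its obvious multi-index generalization), together with compatibility with the unit section, by computing both sides in a manageable truncation and then passing to the limit.

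First, I would fix the truncation set-up. Given an $S$-point $(M,\iota)$ of $\Gr_P^I$ whose divisor $D\subset X\times S$ decomposes as a disjoint union $D=D_1\sqcup D_2$ (via the factorization of $\Gr_P$), Lemma~\ref{finiteness lemma} gives, locally on $S$, an integer $k$ with $P(-kD)\subset M\subset P(kD)$. The short exact sequence
\bd
0\to M\to P(kD)\to P(kD)/M\to 0
\ed
yields, upon applying $R\pi_*\on{DR}^\cdot$ and taking top exterior powers,
\bd
\det(M)\;\simeq\;\det\bigl(P(kD)\bigr)\otimes \det\bigl(P(kD)/M\bigr)^{-1}.
\ed
The sheaf $P(kD)/M$ is a (coherent) $\D$-module set-theoretically supported on $D$; since $D_1\cap D_2=\emptyset$, it splits canonically as $(P(kD)/M)\vert_{D_1}\oplus (P(kD)/M)\vert_{D_2}$, and under the factorization identification of $\Gr_P^I$ with $\Gr_P^{I_1}\times\Gr_P^{I_2}$ the summand at $D_j$ is canonically $P(kD_j)/M_j$. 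A parallel argument applied to the sequence $0\to P\to P(kD)\to P(kD)/P\to 0$ gives $\det(P(kD))\simeq \det(P)\otimes \det(P(kD_1)/P)\otimes \det(P(kD_2)/P)$.

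Combining these with the analogous identities $\det(M_j)\simeq \det(P(kD_j))\otimes\det(P(kD_j)/M_j)^{-1}$ produces a canonical isomorphism
\bd
\det(M)\otimes \det(P)\;\simeq\;\det(M_1)\otimes \det(M_2),
\ed
which is the factorization isomorphism for the normalized line bundle $\det$ (the $\det(P)$ factor is simply the value of $\det$ on the unit section $\unit^{[\emptyset]}$, exactly what is needed to match the unit-compatibility clause in the definition of a factorization line bundle, cf.\ \cite{chiral,BBE}). The routine verifications --- independence from the choice of $k$ (using that $P(kD)/P((k-1)D)$ is itself a direct sum of local contributions); the multi-index analogue via iteration; associativity over triple disjoint loci; and compatibility with the connection along $X^I$ --- all follow from the local nature of the identifications, since all of the relevant quotient modules are supported on $D$ and decompose according to the disjoint union decomposition of $D$.

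The only real obstacle is bookkeeping: ensuring that the canonical isomorphisms assemble into a coherent system as $I$ varies (associativity over $U(I_1,I_2,I_3)$) and behave correctly under the formally integrable connection. Both are handled by noting that the determinant-of-cohomology construction is functorial in the $\D$-module, and that all the short exact sequences used above are pulled back from universal ones over the appropriate product of factorization pieces; this is precisely the style of argument carried out in the $\epsilon$-factor setting of \cite{BBE}, to which our situation reduces once the local decompositions are in hand.
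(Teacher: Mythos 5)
Your proposal is correct and is essentially the argument the paper has in mind: the paper merely asserts that $\det$ on $\Gr_P^I|_{\xarr}$ decomposes as a tensor product of local factors (citing the $\epsilon$-factor formalism of \cite{BBE}), and your truncation via Lemma \ref{finiteness lemma} together with the relative determinants $\det(P(kD)/M)$ supported on the disjoint pieces of $D$ is precisely the standard construction of those local factors, including the correct normalization by $\det(P)$ along the unit section.
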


We can now define a new chiral algebra $\delta_P^c$ for every $c\in
\C$ as level $c$ delta functions along the unit section on $\Gr_P$.
More precisely, the factorization line bundle $\det$ has a lift
$\on{unit}^{\det}$ of the unit section of $\Gr_P$: in other words,
$\det$ is canonically trivialized along $\unit$.  As a consequence,
there is a natural direct image functor from $\D_{X^I}$-modules to
{\em $\det^{\otimes c}$-twisted} $\D$-modules on $\Gr_P^I$.  It
follows that we may take $\omega_{X^I}$, push it forward to $\Gr_P^I$
as a $\det^{\otimes c}$-twisted $\D$-module, and take its
$\theo$-module direct image to $X^I$ to obtain a new chiral algebra,
denoted by $\delta_P^c$. (Alternatively we can describe $\delta_P^c$
as the $\cO$-module restriction of the sheaf of $\det^c$-twisted
differential operators to the unit section.)

\section{$\Winfi$ and $\Winfi({\mathfrak gl}_n)$}\label{W}
In this section, we explain how to identify the chiral algebra
$\delta_{\D}^c$ or, more generally, $\delta_{\D^n}^c$: it is the
chiral algebra associated to the vertex algebra $\Winfi$
(respectively, $\Winfi({\mathfrak gl}_n)$) at level $c$. We refer to
\cite{Kac} for a discussion of $\Winfi$ and \cite{vdL} for
$\Winfi(\gl_n)$.  We begin in Section \ref{introducing winfi} by
reviewing the definition of the chiral algebras $\Winfi$ and, more
generally, $\Winfi({\mathfrak gl}_n)$.  In Section \ref{comparison} we
then identify them with $\delta_{\D}^c$ and $\delta_{\D^n}^c$.

\subsection{Introducing $\Winfi$}\label{introducing winfi}
We recall (from \cite{chiral}, 2.5 and \cite{BS}) the construction of
the Lie* algebra (or conformal algebra \cite{Kac}) $\gl(\D)={\mc
  End}^*\D$.  As a right $\D$-module, $\gl(\D)$ is the induced
$\D$-module $\D\ot\D$. We consider the sheaf $\D$ as a quasi-coherent
$\Oo$-module equipped with a Lie algebra structure given by
differential morphisms. (It forms the Lie algebra of right $\D$-module
endomorphisms of $\D$.) It follows that the induced $\D$-module has a
natural Lie* algebra structure, as explained in \cite{chiral} or
\cite[19,1,7]{book}.  We let $\Winfi=U_{ch}\gl(\D)$ denote the
universal enveloping chiral algebra of $\gl(\D)$. It follows that the
fibers of $\Winfi$ are identified with the vacuum representation of
$\D(\K_x)$,
\begin{equation}\label{Winfi as module}
\Winfi(x)\cong U\D(\K_x)\ot_{U\D(\Oo_x)}\C.
\end{equation}

The Lie* algebra $\gl(\D)$ has a central extension $\wh{\gl}(\D)$ which
may be described as follows.  Consider the short exact sequence
$$0\to \Oo\boxtimes \omega_X\to j_*j^*\Oo\boxtimes\omega_X\to \D\to
0,$$
where $\D$ is considered as $\Oo$-bimodule. Restriction to the
diagonal defines a morphism $\Oo\boxtimes\omega_X\to
\Delta_*\omega_X$, and we take the push-out exact sequence (and push
forward to $X$), obtaining a central extension
$$0\to \omega_X\to \wh{\D}\to \D\to 0.$$ This $\omega_X$-extension of
$\D$ corresponds to an $\omega_X$-central extension of the Lie*
algebra $\gl(\D)$. It is also shown in \cite{chiral,BS}
that the action of $\D(\K_x)$ on $\K_x$ gives rise to a dense embedding of
$\D(\K_x)$ in
$\gl(\K_x)$, the Tate endomorphisms of $\K_x$, and that the Tate central extension
of $\gl(\K_x)$ restricts to the above central extension $\wh{\D}(\K_x)$. 
The following lemma is an immediate consequence: 

\begin{lemma} The action of $\D(\K_x)$ on $\Gr(\K_x)$ lifts to an action of the
central extension $\wh{\D}(\K_x)$ on $\det$ with level one.
\end{lemma}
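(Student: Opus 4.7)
The plan is to deduce the lemma from the classical result that the Tate central extension $\wh{\gl}(\K_x)$ of the Lie algebra of continuous endomorphisms of $\K_x$ acts on the Sato (``Pl\"ucker'') determinant line bundle on $\Gr(\K_x)$ with level one, together with the two ingredients recalled immediately before the statement: the dense embedding $\D(\K_x)\hookrightarrow\gl(\K_x)$ and the identification of $\wh{\D}(\K_x)$ with the restriction of the Tate extension along this embedding.

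More concretely, I would first recall that for any Tate vector space $V$ (in our case $V=\K_x$) with a chosen $c$-lattice $L\subset V$ (here $L=\Oo_x$), the continuous $\gl(V)$-action on $V$ induces a $\gl(V)$-action on $\Gr(V)$ whose restriction to the base point $L$ has tangent space $\Hom_\cont(L, V/L)$. This action does not preserve the determinant line bundle $\det$ on the nose, but does after passage to the canonical $\omega$-central extension of $\gl(V)$ built from Tate's cocycle associated to $L$; in this extension the central element acts on each fiber of $\det$ by the identity, i.e.\ the action has level one. This is standard (Kac--Peterson, Pressley--Segal, Beilinson--Drinfeld \cite{chiral}, Section 2.7).

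Second, I would restrict this central extension along $\D(\K_x)\hookrightarrow\gl(\K_x)$. By the already-quoted result of \cite{chiral,BS} that the pullback of the Tate extension along this dense embedding is precisely $\wh{\D}(\K_x)$, the restricted action gives an action of $\wh{\D}(\K_x)$ on $\det\to\Gr(\K_x)$ lifting the $\D(\K_x)$-action on $\Gr(\K_x)$, and the level is inherited, so it equals one. The compatibility with the $\D(\K_x)$-action on the Grassmannian itself is automatic, since the restriction map is a map of central extensions.

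I do not expect any real obstacle here: the content of the lemma is entirely a consequence of the two identifications recalled in the paragraph preceding its statement, applied to the Tate construction on $\Gr(\K_x)$; the only thing that must be verified is that the normalization of the cocycle used to define $\wh{\D}$ (as the pushout of $\Oo\boxtimes\omega_X\to j_*j^*\Oo\boxtimes\omega_X\to \D\to 0$ along restriction to the diagonal) matches the Tate cocycle with respect to the $c$-lattice $\Oo_x\subset\K_x$, but this is exactly the content of the compatibility statement cited from \cite{chiral,BS}.
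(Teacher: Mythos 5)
Your proposal is correct and follows exactly the route the paper intends: the paper offers no separate argument, simply declaring the lemma an ``immediate consequence'' of the two facts recalled just before it (the dense embedding $\D(\K_x)\hookrightarrow\gl(\K_x)$ and the identification of $\wh{\D}(\K_x)$ with the restriction of the Tate extension), combined with the standard level-one action of the Tate extension on $\det$ over $\Gr(\K_x)$. Your write-up merely makes explicit the steps the paper leaves implicit, including the one genuine point to check (that the pushout cocycle defining $\wh{\D}$ matches the Tate cocycle for the lattice $\Oo_x$), which is indeed the content of the cited compatibility from \cite{chiral,BS}.
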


We denote $\Winfi^c$ the corresponding chiral enveloping algebra at
level $c$.
Repeating the construction with $\D\otimes {\mathfrak gl}_n$ in place of $\D$ leads
to the chiral algebra $\cW^c_{\infty}({\mathfrak gl}_n)$.

\subsection{Comparison of $\delta_\D$ and $\Winfi$}\label{comparison}
In this section we identify the chiral algebra $\delta_{\D^n}^c$ associated
to the factorization space $\Gr_{\D^n}$ with the $\mc W$-algebra
$\cW^c_{\infty}({\mathfrak gl}_n)$:

\begin{thm}\label{comparison thm}
There is a natural isomorphism of chiral algebras $\cW_\infty({\mathfrak gl}_n)\cong
\delta_{\D^n}$.  Moreover for any $c\in \C$, this isomorphism lifts to an
isomorphism of the chiral algebra $\cW^c_{\infty}({\mathfrak gl_n})$ at level $c$
with the
chiral algebra $\delta^c_{\D^n}$ of level $c$ delta functions on
$\Gr_{\D^n}$.
\end{thm}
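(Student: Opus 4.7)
The strategy is to build a map of chiral algebras $\cW^c_\infty(\mathfrak{gl}_n)\to\delta^c_{\D^n}$ using the universal property of the chiral enveloping algebra, and then to check that it is an isomorphism by restricting to fibers. Since $\cW^c_\infty(\mathfrak{gl}_n) = U_{ch}^c\bigl(\gl(\D)\otimes\mathfrak{gl}_n\bigr)$, giving such a chiral algebra map is equivalent to producing a Lie$^*$ algebra morphism from the Tate-type central extension $\wh{\gl}(\D)\otimes\mathfrak{gl}_n$ into $\delta^c_{\D^n}$ under which the central generator $\omega_X$ acts by the scalar $c$.

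To construct that Lie$^*$ map, I would globalize the infinitesimal action already at hand. The proposition of Section \ref{chiral algebra} shows that $\D(\K_x)\otimes\mathfrak{gl}_n$ acts formally transitively on the fiber $\Gr_{\D^n}(x)$ with stabilizer $\D(\Oo_x)\otimes\mathfrak{gl}_n$ at the unit, and this action is the punctual incarnation of a Lie$^*$ action of $\gl(\D)\otimes\mathfrak{gl}_n$ on $\Gr_{\D^n}^I$ over $X^I$ by infinitesimal symmetries (via endomorphisms of $\D^n$ as a right $\D$-module). Pushing forward $\unit_!^I\omega_{X^I}$ under this action yields a Lie$^*$ action of $\gl(\D)\otimes\mathfrak{gl}_n$ on $\delta_{\D^n}$; twisting by $\det^c$ modifies this action into an action of the central extension $\wh{\gl}(\D)\otimes\mathfrak{gl}_n$ in which the central $\omega_X$ acts by $c$, precisely because (by the lemma identifying the pullback of the Tate extension on $\gl(\K_x)$ along $\D(\K_x)\hookrightarrow\gl(\K_x)$) the $\det^c$-twisted action of $\D(\K_x)\otimes\mathfrak{gl}_n$ on delta functions factors through $\wh{\D}(\K_x)\otimes\mathfrak{gl}_n$ at level $c$. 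By the universal property this produces the desired chiral algebra map $\Phi:\cW^c_\infty(\mathfrak{gl}_n)\to\delta^c_{\D^n}$.

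To show that $\Phi$ is an isomorphism, I would work fiberwise over $X$. The fiber of $\cW^c_\infty(\mathfrak{gl}_n)$ at $x$ is, by \eqref{Winfi as module} (and its $\mathfrak{gl}_n$ analogue), the vacuum module $U\bigl(\wh\D(\K_x)\otimes\mathfrak{gl}_n\bigr)\otimes_{U(\D(\Oo_x)\otimes\mathfrak{gl}_n)}\C_c$ at level $c$; the fiber of $\delta^c_{\D^n}$ at $x$ is the same module by the analogue of \eqref{delta as module} (twisted by $\det^c$), which in turn comes from formal transitivity of the action and the identification of the stabilizer with $\D(\Oo_x)\otimes\mathfrak{gl}_n$. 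The map $\Phi$ sends the vacuum vector of the former (the canonical generator of the chiral enveloping algebra at the unit) to the canonical generator of $\delta^c_{\D^n}|_x$ (the delta function at the unit section, trivialized along $\unit^{\det}$), and intertwines the $\wh{\D}(\K_x)\otimes\mathfrak{gl}_n$-actions by construction; hence $\Phi|_x$ is an isomorphism of induced modules. Since a morphism of chiral algebras that is a fiberwise isomorphism is an isomorphism, this gives the theorem; the untwisted case $c=0$ is recovered by specialization.

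The part that I expect to require the most care is the compatibility of central extensions: one must verify that the extension of $\gl(\D)\otimes\mathfrak{gl}_n$ cut out by the $\det^c$-twisted action on $\delta_{\D^n}$ really agrees with the extension $\wh{\gl}(\D)\otimes\mathfrak{gl}_n$ entering the definition of $\cW^c_\infty(\mathfrak{gl}_n)$, and not merely with some isomorphic one. Concretely, this amounts to tracing the factorization structure on $\det$ (via the local $\epsilon$-factor picture alluded to after Proposition~3.5) through the action on the unit section and matching it with the push-out description of $\wh{\D}$ via $j_*j^*\Oo\boxtimes\omega_X$; once this matching of cocycles is in place, the rest of the argument is formal from the universal property and the fiberwise computation.
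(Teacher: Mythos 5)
Your plan is correct and follows essentially the same route as the paper's own ``direct proof'': construct the factorization action of $\gl(\D)\otimes\mathfrak{gl}_n$ on $\Gr_{\D^n}$ and hence on $\delta_{\D^n}$, use the unit section and the universal property of the chiral envelope to produce the map, verify it fiberwise against the vacuum-module descriptions \eqref{Winfi as module} and \eqref{delta as module}, and handle nonzero level via the level-one lift of $\D(\K_x)$ to $\det$. The one step you should be sure to spell out (the paper does, via the derivation property and the unit axiom) is that acting on the unit really gives a Lie$^*$ morphism $\gl(\D)\to\delta_{\D^n}$ with respect to the chiral bracket, which is what licenses the appeal to the universal property; your flagged concern about matching the two central extensions is exactly the right delicate point.
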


We will explain two proofs of this theorem below.  First, in Section
\ref{chiral hopf}, we explain that the theorem is a special case of a
very general construction of chiral algebras from the unit section of
a unital factorization space.  Because this general construction seems to be a 
folk theorem that does not appear in the literature, we also
explain in Section \ref{special proof} a more concrete proof that
follows closely the approach to Kac-Moody algebras explained in
\cite{Ga}.

\subsubsection{Chiral Hopf Algebras and Factorization Spaces}\label{chiral hopf}
We begin with some generalities.  Let $\{S^I\rightarrow X^I\}_I$ be a
unital factorization space.  Recall that, in particular, each $S^I$ is
a formally smooth ind-scheme of ind-finite type over $X^I$.  Write $S=
S^{\{\ast\}}\xrightarrow{r} X$.  Pulling back the tangent sheaf
$\unit^* T_S$ along the unit section gives a $\D_X$-module (using the
flat connection on $S/X$) which we denote by $L$.  The factorization
structure on $S$ equips $L$ with a structure of Lie*-algebra on $X$.
We assume that this Lie*-algebra is an ind-$\D$-vector bundle---that
is, that it is a colimit of $\D$-vector bundles $L_j$ and the
Lie*-algebra structure is compatible with this realization (in the
standard sense that $L_j$ and $L_k$ multiply to $L_{j+k}$).  It then
follows from \cite[Lemma~2.5.7]{chiral} that the dual $\D$-module
$L^\vee$ is a pro-Lie$^!$-coalgebra, \cite[Section~2.5.7]{chiral}.

Write $\wh{S}$ for the formal completion of $S$ along the unit
section; then $\wh{S}$ is again a unital factorization space over $X$.
This space can be completely reconstructed from the Lie*-algebra $L$:
this is essentially just the factorization analog of the fact that a
smooth formal group scheme can be reconstructed from its Lie algebra.
In the factorization setting, this works as follows.  To the
Lie*-algebra one can associate a chiral algebra, the {\em chiral
  enveloping algebra} $U^{ch}(L)$ of $L$ \cite[Section~3.7]{chiral}.
The chiral enveloping algebra is naturally a cocommutative Hopf chiral
algebra (defined in \cite[Section~3.4.16]{chiral}.  The dual
$\D$-module $U^{ch}(L)^\vee$ is then a commutative pro-Hopf chiral
algebra, and, by the discussion in \cite[Section~3.4.17]{chiral}, its
formal spectrum gives a factorization space $\wh{S}(L) =
\{\wh{S}(L)^I\rightarrow X^I\}$.  Under the hypothesis that $L$ is an
ind-$\D$-vector bundle, moreover, $\wh{S}(L)$ is isomorphic to
$\wh{S}$, the formal completion of our original factorization space
along the unit section.

As an immediate consequence, one has:
\begin{thm}\label{folk thm}
  Suppose that $\{S^I\rightarrow X^I\}_I$ is a unital factorization
  space and that the associated Lie*-algebra $L$ is an ind-$\D$-vector
  bundle.  Then the delta-function chiral algebra $\delta = r_*
  \unit_! \omega_X$ is isomorphic to $U^{ch}(L)$.
\end{thm}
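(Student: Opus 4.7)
The plan is to prove Theorem~\ref{folk thm} as the factorization analog of the classical statement that distributions supported at the identity of a formal group are canonically the universal enveloping algebra of its Lie algebra. I first observe that $\delta = r_* \unit_! \omega_X$ depends only on the formal neighborhood $\wh{S}$ of $S$ along the unit section, since $\unit_! \omega_X$ is supported set-theoretically on $\unit(X)$. By the reconstruction result recalled immediately before the theorem, the ind-$\D$-vector bundle hypothesis on $L$ gives a factorization isomorphism $\wh{S}^I \cong \on{Spf}\bigl(U^{ch}(L)^\vee|_{X^I}\bigr)$. Setting $F = U^{ch}(L)^\vee$, it therefore suffices to identify the delta chiral algebra along the unit section of the factorization formal scheme $\on{Spf}(F)$ with $U^{ch}(L)$.

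The core of the argument is a factorization-theoretic Cartier duality. Over $X^I$, the direct image $r_*^I \unit^I_! \omega_{X^I}$ from $\on{Spf}(F|_{X^I})$ is by construction the colimit in $N$ of $\uHom\bigl(F|_{X^I}/\mathfrak m^N,\omega_{X^I}\bigr)$, where $\mathfrak m$ is the augmentation ideal. By the PBW theorem for chiral enveloping algebras \cite[Section~3.7]{chiral}, this filtered colimit is, on associated gradeds, the symmetric algebra on $L$, and is canonically dual to $F = U^{ch}(L)^\vee$. Hence the continuous $\Oo$-dual of $F$ is $U^{ch}(L)$ as a $\D$-module on $X^I$. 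The chiral multiplication on $\delta$ is obtained by combining delta functions at disjoint points via the factorization isomorphisms of $\wh{S}$ and then restricting to the diagonal; this operation is dual to the pro-Hopf comultiplication on $F$, and under Cartier duality it becomes the chiral multiplication on $U^{ch}(L)$ coming from its cocommutative Hopf structure in \cite[Section~3.4.16--3.4.17]{chiral}. The unit and associativity match similarly.

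The main obstacle is keeping this duality consistent in the pro-ind setting: $F$ is pro-$\D$-coherent while $U^{ch}(L)$ is ind-$\D$-coherent, and the pairing together with its compatibility with the $j_*j^*$ operation defining chiral operations must be set up with care. Once in place, the application to Theorem~\ref{comparison thm} is immediate: for $S^I = \Gr_{\D^n}^I$, the associated Lie*-algebra is $\gl(\D)\otimes\gl_n$, whose chiral enveloping algebra is $\cW_\infty(\gl_n)$. The level-$c$ refinement follows by twisting: $\delta_{\D^n}^c$ is the $\det^{\otimes c}$-twisted direct image from $\Gr_{\D^n}^I$, and $\gl(\D)\otimes\gl_n$ is replaced by its $\omega_X$-central extension $\wh{\gl}(\D)\otimes\gl_n$, whose level-one action on $\det$ was established in the lemma preceding Theorem~\ref{comparison thm}; the same Cartier-duality argument applied to the twisted setup then yields $\delta_{\D^n}^c \cong \cW^c_\infty(\gl_n)$.
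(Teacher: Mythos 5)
Your proposal is correct and follows essentially the same route as the paper: the paper derives Theorem~\ref{folk thm} as an ``immediate consequence'' of the reconstruction $\wh{S}\cong\wh{S}(L)=\operatorname{Spf}\bigl(U^{ch}(L)^\vee\bigr)$ sketched just before the statement, which is exactly the backbone of your argument. You simply make explicit the Cartier-duality step (delta functions on $\operatorname{Spf}(F)$ as the continuous dual of $F$, with PBW controlling the pro-ind bookkeeping) and the level-$c$ twist, both of which the paper leaves to the reader.
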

\noindent
There is also an extension of the theorem to the twisted chiral algebra $\delta^c$
associated to a factorization line bundle $\cL^I$ on 
$S^I$.  We leave it to the reader to formulate the analog of Theorem \ref{folk thm}
in this case.

Theorem \ref{comparison thm} is an immediate corollary once we identify $L$ with 
${\mathfrak gl}(\D^n)$; this is a consequence of Proposition \ref{fact action}.

\subsubsection{Direct Proof of Theorem \ref{comparison thm}}\label{special proof}
In this section we give a somewhat
 different, more direct and concrete, identification of the chiral algebra
$\delta_{\D^n}$ associated
to the factorization space $\Gr_{\D^n}$ with the $\mc W$-algebra $\Winfi({\mathfrak
gl}_n)$.
For simplicity of exposition, we restrict to the case $n=1$ and $c=0$, in other
words to $\Winfi$.
The proof is modeled directly on the identification of the Kac-Moody
vertex algebra with the factorization algebra of delta functions on
the affine Grassmannian due to Beilinson and Drinfeld explained in \cite{Ga}. 
The result is a formal consequence of the
construction of an extension of the action of $\D(\K_x)$ on
$\Gr_\D(x)$ to a ``factorization action'' of the Lie* algebra
$\gl(\D)$ on the factorization space $\Gr_\D$ (Proposition
\ref{fact action} below).

We assume that $X$ is affine in the construction, though the final
result will not depend on this assumption.  Recall (\cite{chiral} 3.7,
\cite{Ga}) that to any right $\D$-module $L$ on $X$ we can assign a
collection $\wt{L}^\ell$ of left $\D$-modules on $X$, with certain
factorization isomorphisms.  Consider the open subset
$j^{(I)}:U\subset X^I\times X$ of pairs $(\{x_i\},x)$ where $x\neq
x_i$ for any $i\in I$. We let $\wt{L}^I$ denote the relative de Rham
cohomology sheaf $\wt{L}^I=p_{X^I\cdot}(\on{id}\boxtimes
h)(j_*^{(I)}j^{(I)*} \Oo_{X^I}\boxt L)$ on $X^I$, whose fiber at
$\{x_i\}$ is the de Rham cohomology of $L$ on $X$ with poles allowed at
the $x_i$. The sheaf inherits a left $\D$-module structure from that
on $\Oo_{X^I}$. The sheaves $\wt{L}^I$ satisfy several good
factorization properties with respect to maps of sets $J\to I$
(\cite{chiral}), in particular for $J\twoheadrightarrow I$ we have
canonical isomorphisms $\Delta_I^!\wt{L}^J\cong \wt{L}^I$.

The sheaf $\wt{L}^I$ contains as a $\D$-submodule the de Rham
cohomology of $L$ with no poles allowed,
$\wb{L}^I=p_{X^I*}(\Oo_{X^I}\boxt L)$.  We define $L^I$ as the
quotient sheaf, and note that this sheaf is local in $X$, in the sense
that the fiber at $\{x_i\}$ only depends on $L$ in the neighborhood of
the $x_i$. In particular $$L^{\{1\}}=p_{X*}(j_*j^* (\Oo_X\boxt
L)/\Oo_X\boxt L)=L^\ell$$ is the left $\D$-module version of $L$. When
$L$ is a Lie* algebra, then the sheaf $\wt{L}^I$ is a Lie algebra in
the tensor category of left $\D$-modules, and $\wb{L}^I$ is a Lie
sub-algebra.  The sheaf $U\wt{L}^I\ot_{U\wb{L}^I}\C$ of induced
(vacuum) representations then forms a factorization algebra,
corresponding to the chiral
enveloping algebra $U^{ch}(L)$ of $L$.

In our case, the Lie* algebra $L=\gl(\D)$ is an induced $\D$-module,
with de Rham cohomology $h(\gl(\D))=\D$. It follows that the sheaves
$\wt{\gl}(\D)^I$ may be identified (as Lie algebras in left
$\D$-modules) with the usual (sheaf or $\Oo$-module) push-forward
$$\wt{\gl}(\D)^I=p_{X^I\cdot}(j_*^{(I)}j^{(I)*}\Oo_{X^I}\boxt
\D)=p_{X^I*}(\D_{X^I\times X/X^I}(*\xarr),$$ i.e. with differential
operators with arbitrary poles along the universal divisor on
$X^I\times X$ over $X^I$. The subsheaf $\wb{\gl}(\D)^I$ then consists
of global differential operators on $X$. The quotient sheaf has fiber
at $x$ the space $\D(\K_x)/\D(\Oo_x)$ of delta-functions at $x$, and
has fiber at $\{x_i\}$ the global sections of the $\D$-module on $X$
which consists of delta functions at each of the points $x_i$, without
multiplicities.  

We now describe the ``factorization action'' of $\wt{\gl}(\D)$ on
$\Gr_\D$, which is a multipoint generalization of the formally
transitive action of $\D(\K_x)$ on $\Gr_\D(x)$ and resulting
description $T_{\D}\Gr_\D|_x=\D(\K_x)/D(\Oo_x)$ of the tangent
space.

\begin{prop} \label{fact action}
\mbox{}
\begin{enumerate}
\item The Lie algebra $\wt{\gl}(\D)^I$ in left $\D$-modules over
$X^I$ acts on $\Gr_\D^I$, compatibly with connections and
factorization structures, and with stabilizer at the unit section the
Lie sub-algebra $\wb{\gl}(D)^I$.
\item For $x\in X$, the Lie algebra $\wt{\gl}(\D)(x)$ is dense in
$\D(\K_x)$, and its action on $\Gr_\D(x)$ is the restriction of the 
formally transitive action of $\D(\K_x)$.
\end{enumerate}
\end{prop}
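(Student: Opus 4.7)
The approach is to construct the action directly from the identification $\wt{\gl}(\D)^I = p_{X^I*}\bigl(\D_{X^I\times X/X^I}(*\xarr)\bigr)$ as relative differential operators with arbitrary poles along the universal divisor, and then verify each claimed property in turn. Given a test scheme $S \to X^I$ with divisor $D\subset S\times X$, an $S$-point $(M, \iota)$ of $\Gr_\D^I$, and an $S$-family $\phi$ of sections of $\wt{\gl}(\D)^I$ (i.e., a relative differential operator on $X\times S/S$ with poles on $D$), I would define the action on $(M,\iota)$ as the tangent vector represented via \eqref{tangent space} by the right $\D$-linear composite
\[
M \hookrightarrow \D(\infty\cdot D) \xrightarrow{\phi\,\cdot} \D(\infty\cdot D) \twoheadrightarrow \D(\infty\cdot D)/M,
\]
where the left injection is obtained from $\iota$ and the middle arrow is left multiplication by $\phi$. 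This is exactly the map appearing in Lemma \ref{tangent spaces} in the case $P = \D$, and yields a morphism of sheaves $\wt{\gl}(\D)^I \to T_{\Gr_\D^I/X^I}$ on $X^I$.

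I would then verify that this is a morphism of Lie algebras in left $\D$-modules, that its stabilizer at the unit section is $\wb{\gl}(\D)^I$, and that it is compatible with connections and factorization. Under the identification above, the bracket on $\wt{\gl}(\D)^I$ is the commutator of differential operators, so the Leibniz property of the resulting vector fields is immediate from composition on the quotient $\D(\infty\cdot D)/M$. The stabilizer computation is equally direct: at the unit section $M = \D$, the composite becomes multiplication $\D \xrightarrow{\phi\,\cdot} \D(\infty\cdot D)/\D$, which vanishes exactly when $\phi$ preserves $\D$, that is, when $\phi$ has no poles along $D$, giving precisely the sections of $\wb{\gl}(\D)^I$. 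Connection compatibility is formal: the canonical infinitesimal extension used to build the connection in the proof of Proposition \ref{factorization prop} is defined by pullback along $X\times S\times A \to X\times S$, and this commutes with pullback of relative differential operators. For factorization, over the locus where $D = D_1 \sqcup D_2$ a section of $\D_{X\times S/S}(*D)$ splits uniquely as a sum of summands with poles only on $D_1$ or on $D_2$, and the action respects the corresponding product decomposition of $\Gr_\D^I$ from Proposition \ref{factorization prop}.

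For part (2), density of $\wt{\gl}(\D)(x) = \D_X(*x)_x$ inside $\D(\K_x)$ is the standard statement that rational differential operators at $x$ are dense in the Tate endomorphisms of $\K_x$---the very ingredient used in Section \ref{introducing winfi} to build the Tate central extension. Agreement of the two actions on $\Gr_\D(x) = \Gr(\K_x)$ then comes from passing to de Rham cohomology: on the fiber at $x$ the composite above reduces to sending a $c$-lattice $h(M) \subset \K_x$ to the class of $\phi\cdot h(M)$ modulo $h(M)$, which is manifestly the restriction of the formally transitive $\D(\K_x)$-action recorded in Section \ref{chiral algebra}. The main obstacle, as I see it, is not any single conceptual step but the bookkeeping around factorization---matching $\wt{\gl}(\D)^I$ with the appropriate completed multi-point Lie algebras over the various diagonal strata and reconciling this with the ind-scheme structure on $\Gr_\D^I$ requires careful tracking of commensurability data, even though each individual check is routine.
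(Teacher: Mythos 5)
Your proposal is correct and follows essentially the same route as the paper: the paper lets $\wt{\gl}(\D)^I=p_{X^I*}\D(*\xarr)$ act by infinitesimal automorphisms (left multiplication) on the sheaf $\D(*\xarr)$ and hence on its functor of submodules cosupported on $\xarr$, which is exactly the action whose derivative is your composite $M\hookrightarrow \D(\infty\cdot D)\xrightarrow{\phi\cdot}\D(\infty\cdot D)\twoheadrightarrow \D(\infty\cdot D)/M$, and the stabilizer, factorization, and transitivity checks (the last via Lemma \ref{tangent spaces}) match the paper's. The only presentational difference is that the paper defines the action directly on the functor of points rather than first on tangent spaces, which makes the Lie-algebra-homomorphism property automatic rather than a separate verification.
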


\begin{proof}
Consider the sheaf of Lie algebras
$\wt{\gl}(\D)^I=p_{X^I*}\D(*\xarr)$ of differential operators
with poles along the universal divisor $\xarr$. It acts by
infinitesimal automorphisms on the sheaf $\D(*\xarr)$, and hence on
its functor of submodules, preserving the sub-functor of submodules
cosupported on $\xarr$.  This is our desired action on
$\Gr_\D^I$. The action is compatible with decompositions of the
divisor and forgetting of multiplicities, hence with the factorization
structure. The formal transitivity follows from Lemma \ref{tangent spaces}.
\end{proof}

We are now ready to prove Theorem \ref{comparison thm}.

\begin{proof}[Proof of Theorem \ref{comparison thm}]
\newcommand{\act}{\mathsf{act}}
We begin by constructing a natural map 
$$\act: j_*j^*(\gl(\D)\boxt\delta_\D)\to \Delta_!\delta_\D$$ satisfying
\begin{enumerate}
\item $\gl(\D)$ acts on the chiral algebra $\delta_\D$ by derivations.
\item The resulting action on $\delta_\D(x)$ of the completed de Rham
cohomology $\wh{h}_x(\gl(\D))$ coincides with the action
$\D(\K_x)\ot \delta_\D(x)\to\delta_\D(x)$ induced by the action
$\act_x$ of $\D(\K_x)$ on $\Gr_\D(x)$.
\end{enumerate}
To construct the map $\act$, it is equivalent to define the map
obtained by taking de Rham cohomology along the first factor, that is a
map $(j_*j^*(\D\boxt\Oo))\boxt \delta_\D\to \Delta_*\delta_\D.$ This
map is supported set-theoretically on the diagonal, and so is
equivalent to a continuous $\D$-module action of the completion
$p_{1*}(j_*j^*(\D\boxt\Oo))\wh{}$ along the diagonal on
$\delta_\D$. Note that
$$\wt{\gl}(\D)^X=p_{1*}(j_*j^*(\D\boxt\Oo))\subset
p_{1*}(j_*j^*(\D\boxt\Oo))\wh{}$$ is dense since $X$ is affine, and so
it suffices to define a continuous $\D$-module action of
$\wt{\gl}(\D)^X$ on $\delta_\D$. Such an action is provided by 
Proposition \ref{fact action}. The statement that this action is by derivations
of the chiral algebra $\delta_\D$ follows from the compatibility of
the action with factorization. More precisely, the action of
$\wt{\gl}(\D)^X$ on $\delta_\D$ is the restriction to the diagonal of
the action of $\wt{\gl}(\D)^I$ on $\delta_\D^I$ for $I=\{1,2\}$, and
the chiral bracket on $\delta_\D$ is simply the gluing data for
$\delta_\D^I$ along the diagonal, so that we have a commutative
diagram
\begin{equation*}
\begin{array}{ccccc}
j_*j^*(\wt{\gl}(\D)^{\{1,2\}}\ot \delta_\D^{\{1,2\}})&=& 
\wt{\gl}(\D)^{\{1,2\}}\ot j_*j^*(\delta_\D\boxt\delta_\D)
&\longrightarrow& j_*j^*(\delta_\D\boxt\delta_\D)\\
\downarrow&&\downarrow&&\downarrow\\
\wt{\gl}(\D)^{\{1,2\}}\ot\Delta_!\delta_\D&=&\Delta_!(\wt{\gl}(\D)^X\ot\delta_\D)&
\longrightarrow& \Delta_! \delta_\D.
\end{array}
\end{equation*}
The compatibility with the one-point action follows from the
compatibility of the action of $\wt{\gl}(\D)^X(x)$.

Next we construct a map of $\gl(\D)$ in $\delta_\D$ which on fibers
is the embedding of the tangent space
$i_x^!\gl(\D)=\D(\K_x)/\D(\Oo_x)$ into delta-functions on
$\Gr_\D(x)$. This is simply the image of the action map $\D(\K_x)\cdot
1\subset \delta_\D(x)$ on the unit, so the families version is the map
$$j_*j^*(\gl(\D)\boxt\omega_X)\xrightarrow{\act(\unit)}
\Delta_!\delta_\D$$ given by the action $\act$ on the unit, which
factors through a map $\gl(\D)\to\delta_\D$.  It follows from the
derivation property of $\act$ that this is a map of chiral modules
over $\gl(\D)$. Moreover we claim that the restriction of the chiral
bracket of $\delta_\D$ to $j_*j^*(\gl(\D)\boxt\delta_\D)$ is the same
as the action map $\act$. This follows from the derivation
property of $\act$ and the unit axiom, by comparing the two ways
of multiplying $\gl(\D)\boxt \omega_X \boxt \delta_\D$.

It now follows that the map $\gl(\D)\to\delta_\D$ lifts to a
homomorphism of chiral algebras and $\gl(\D)$-modules
$\Winfi\to\delta_\D$, which on fibers induces the isomorphism
$\Winfi(x)\cong\delta_\D(x)$ arising from the actions of $\D(\K_x)$,
\eqref{delta as module} and \eqref{Winfi as module}.  The theorem at
level zero follows.  Finally, the theorem at arbitrary level follows
from the statement concerning the lifting of $\D(\K_x)$ to the
determinant line bundle on $\Gr_\D(x)$ at level one.
\end{proof}

\section{$\Winfi$-symmetry and integrable systems}\label{discussion}
In this section, we explain how the factorization space $\Gr_{\D^n}$ unifies
geometric features of the free fermion CFT, the KP hierarchy, and the 
geometry of moduli of curves and bundles.  We begin by reviewing the geometry of the
Krichever construction (Section \ref{krichever}).  
We then explain, from the point of view of the geometry of $\D$-bundles, the
additional $\Winfi$-symmetry of these systems (Section
\ref{W-orbits}).  The infinitesimal $\Winfi$-symmetry is only part of the full
symmetry 
encoded in the factorization space $\Gr_{\D^n}$, however.  In Section
\ref{backlund-darboux}, we explain the relationship between 
global symmetries of bundles---the Hecke modifications---and of solitons---the
B\"acklund-Darboux transformations.  Finally, we explore (Section
\ref{W-geometry}) the significance of 
$\Gr_{\D^n}$ for the interesting and still-mysterious
subject of $\cW$-geometry.

\subsection{The Krichever construction}\label{krichever}
Let us begin by briefly reviewing the Krichever construction of
solutions of KP in the formalism of the Sato Grassmannian following
\cite{KNTY}---see also \cite{DJM,SW,Mu}.

Let $\cK=\C((z))$ denote the field of Laurent series, and
$\cO=\C[[z]]$ the ring of Taylor series.  The Sato Grassmannian $\GR$
\cite{Sa} parametrizes subspaces of $\cK$ which are complementary to
subspaces commensurable with $\cO$ (see \cite{AMP,solitons} for
algebraic constructions of $\GR$. Note that this Grassmannian, which is a scheme,
is quite different from the ind-scheme $\Gr(\cK)$, the thin Grassmannian,
parametrizing subspaces commensurable with $\cO$.)   

The Lie algebras $\cK$ and $\Der
\cK=\C((z))\del_z$ of Laurent series (with zero bracket) and Laurent
vector fields act on $\GR$ through their action (by multiplication and
derivation) on $\cK$ itself. The KP hierarchy appears as the
infinitely many commuting flows coming from the action of $\cK$ (or
more precisely of its sub-algebra $\C[z\inv]$) on the Grassmannian (or
more precisely on its big cell, in natural coordinates).

Given a smooth projective complex curve $(X,x)$ and a line bundle
$\cL$ on it, together with a formal coordinate and a formal
trivialization of $\cL$ at $x$, Krichever's construction produces a
point of the Sato Grassmannian, and hence a solution of the KP
hierarchy which may be expressed using theta functions.  Namely, we
consider the space $\cL(X\sm x)$ of sections on the punctured curve,
embedded in $\cK$ using the coordinate and trivialization. Note that
for this construction we only require $X$ to be smooth at $x$, and can
consider on an equal footing rank one torsion-free sheaves on singular
curves with a marked smooth point.  The construction can also be
described as specifying the conformal blocks (or correlation
functions) for the theory of a free fermion or a free boson on $X$
twisted by $\cL$ \cite{KNTY,Ueno,book}.

The KP flows (action of $\cK$) vary the line bundle $\cL$ via
translation in the Picard group of $X$ (the positive half $\cO$ merely
changing the trivialization). This action is in fact infinitesimally
transitive on the moduli space $\wh{Pic}(X,x)$ of line bundles on $X$
with a formal trivialization at $x$, thereby giving a formal
uniformization of the Picard group $Pic(X)$.  Likewise, the action of
$\on{Der}\cK$ (for fixed line bundle $\cL=\cO_X$) or the action of the
semi-direct product $\cK\oplus \Der \cK=\cD_{\leq 1}(\cK)$ deforms the
pointed curve $(X,x)$ (with the Taylor vector fields
$\on{Der}\cO=\C[[z]]\del_z$ merely changing the coordinate and moving
the point along $X$).  Again, these actions are infinitesimally
transitive on the moduli space $\wh{\fM}_{g,1}$ of pointed curves with
formal coordinate or of the $\wh{Pic}(X,x)$-bundle $\wh{Pic}_{g,1}$ of
all Krichever data, and give formal uniformizations of the moduli
spaces $\fM_{g,1}$ and $Pic_{g,1}$: see \cite{Ko,ADKP,BS,TUY}, or
\cite{book} for a detailed exposition. We can thus consider the moduli
spaces $\wh{Pic}(X,x)$, $\wh{\fM}_{g,1}$ and $\wh{Pic}_{g,1}$ as
global (integrated) versions of homogeneous spaces for the
corresponding Lie algebras.  The same construction applies with
$\GR=\GR(\cK)$ replaced by the (isomorphic) Grassmannian
$\GR^n=\GR(\cK^n)$, line bundles replaced by vector bundles and
$\cK=\gl_1(\cK)$ replaced by the loop algebra $\gl_n(\cK)$. The KP
flows on $\GR$ are now replaced by the multicomponent KP flows on
$\GR^n$, corresponding to maximal tori in $\gl_n(\cK)$ \cite{KvdL}
(see \cite{Plaza} and \cite{solitons} for more on the geometry of
multicomponent KP).

The Sato Grassmannian carries a canonical line bundle, the determinant
line bundle, which defines the Pl\"ucker embedding of $\GR$ into the
projective space of the fermionic Fock space.  This line bundle
restricts to the corresponding determinant or theta line bundles on
the moduli of curves and bundles. The actions of the Lie algebras
$\cK$, $\on{Der} \cK$ and $\gl_n(\cK)$ on the Grassmannian (and on the
corresponding moduli spaces) extend canonically to the actions of the
Heisenberg, Virasoro and Kac-Moody central extensions $\wh{\gl}_1$,
$Vir$ and $\wh{\gl}_n$ respectively on the determinant line bundles.

\subsection{$\Winfi$-orbits of Krichever data}\label{W-orbits}
The free fermion theory, the Sato Grassmannian and the determinant line bundle all 
carry an important additional symmetry: the action of
the Lie algebra $\cD(\cK)$ of differential operators with Laurent coefficients and
of its central extension $\Winfi$.

From the point of view of the KP hierarchy, the action of $\Winfi$ is given by the
Orlov-Schulman additional symmetries,
and is given explicitly on tau functions by the Adler-Shiota-van Moerbeke formula
(see \cite{vM} for a review).
Note that this symmetry algebra contains both
the Heisenberg and Virasoro algebras as zeroth-order and first-order operators. It
is natural, therefore, to ask 
for a moduli space interpretation of (integrated) orbits of the $\Winfi$ action
extending the 
Heisenberg-Virasoro uniformization of moduli of curves and bundles.

Thus, given a pointed curve $(X,x)$ we consider the moduli functor
$\BunDhat(X,x)$ of $\D$-line bundles on $X$ trivialized in the formal
neighborhood of $x$. By the Cannings-Holland correspondence (as in
Section \ref{D-bundles}) this functor is the direct limit of the moduli functors
of rank one torsion-free sheaves on cuspidal quotients of $X$ equipped
with trivializations near $x$ (in other words, we allow singularities
at all points other than $x$). It is then easy to see that the
Krichever embedding on cusp curves gives an embedding
$$\BunDhat(X,x)\hookrightarrow \GR.$$
Explicitly, this assigns to a
$\D$-line bundle $M$ trivialized near $x$ its de Rham cohomology on the
punctured curve $h(M|_{X\sm x})\subset h(M|_{D^\times})$, viewed as a subspace of its de Rham cohomology on the
punctured disc (which is itself identified with $\C((z))$).  The
determinant bundle on the Sato Grassmannian restricts to the usual
determinant line for torsion-free sheaves, which, as we have explained
in Section \ref{levels}, is the determinant line of de Rham cohomology
for $\D$-bundles.  Furthermore, we have:
\begin{prop} The $\Winfi$ flows on the Sato Grassmannian span the tangent space to the 
embedding $\BunDhat(X,x)\to \GR$ of the moduli of $\D$-line bundles on $X$, or
equivalently
rank one sheaves on cusp quotients of $X$, trivialized at $x$. 
\end{prop}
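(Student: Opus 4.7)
The plan is to adapt, to the $\Winfi$-setting, the classical infinitesimal uniformization of $\wh{\on{Pic}}(X,x)$ by $\cK_x$ recalled in Section \ref{krichever} (cf.~\cite{ADKP,BS,book}): I will compute both $T_{(M,\phi)}\BunDhat(X,x)$ and the image of the $\Winfi$-orbit map $\cD(\cK_x)\to T_W\GR$ as the same explicit quotient of $\cD(\cK_x)$, where $W=h(M|_{X\sm x})$.

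First I would unwind the orbit side. The tangent space to $\GR$ at $W$ is $\Hom_{\cont}(W,\cK_x/W)$, and the action of $\cD(\cK_x)$ on $\cK_x$ by continuous endomorphisms gives
$$\cD(\cK_x)\to\Hom_{\cont}(W,\cK_x/W),\qquad P\mapsto\big(w\mapsto Pw\bmod W\big),$$
whose image is $\cD(\cK_x)/\on{Stab}(W)$, where $\on{Stab}(W)=\{P\in\cD(\cK_x):PW\subseteq W\}$.

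Next I would compute $T_{(M,\phi)}\BunDhat(X,x)$ by a \v{C}ech/deformation-theoretic argument built on the cover of $X$ by $X\sm x$ and the formal disc $\wh{D}_x$. Any first-order deformation of $(M,\phi)$ is obtained by gluing the trivial deformation of $M|_{X\sm x}$ to the trivial deformation $\D|_{\wh{D}_x}\otimes\C[\epsilon]$ of $M|_{\wh{D}_x}=\D|_{\wh{D}_x}$ (forced by the extended trivialization) along $\wh{D}_x^\times$ via an automorphism $1+\epsilon u$ with $u\in\End_\D(M)(\wh{D}_x^\times)$. Since the fixed trivialization kills all nontrivial infinitesimal automorphisms on $\wh{D}_x$, the only allowed reglueings are by automorphisms of $M|_{X\sm x}$, yielding
$$T_{(M,\phi)}\BunDhat(X,x)=\End_\D(M)(\wh{D}_x^\times)\big/\End_\D(M)(X\sm x).$$
The trivialization $\phi$ identifies the numerator with $\cD(\cK_x)$ via left multiplication on $M|_{\wh{D}_x^\times}\cong\D|_{\wh{D}_x^\times}$, and the two arrows $\cD(\cK_x)\to T_W\GR$ and $\cD(\cK_x)\to T_{(M,\phi)}\BunDhat(X,x)$ coincide under the differential of the Krichever embedding.

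The remaining step, and the main obstacle, is to identify the two denominators---that is, to show $\End_\D(M)(X\sm x)=\on{Stab}(W)$ as subspaces of $\cD(\cK_x)$. The inclusion $\subseteq$ is formal, since functoriality of $h$ sends a $\D$-linear endomorphism of $M|_{X\sm x}$ to its action on $W$, and that action must agree with the restriction of left multiplication by the corresponding element of $\cD(\cK_x)$. The reverse inclusion---that every $P$ preserving $W$ descends from a genuine $\D$-endomorphism of $M|_{X\sm x}$---is the delicate point. I expect to establish it via the Cannings-Holland / Smith-Stafford picture of Section \ref{D-bundles}: pass to a cusp quotient $Y$ of $X$ deep enough that $M$ cusp-induces from a rank-one torsion-free sheaf $\cF$ on $Y$ and that $P$ becomes $\cO_Y$-linear on $W\cong\cF(Y\sm y)$; then $P$ descends to a morphism $\cF|_{Y\sm y}\to\cF|_{Y\sm y}$, which cusp-induces back to an element of $\End_\D(M)(X\sm x)$ whose image in $\cD(\cK_x)$ is $P$. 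Once the reverse inclusion is in hand, the two quotients agree and the $\Winfi$-flows precisely span $T_{(M,\phi)}\BunDhat(X,x)$, as claimed.
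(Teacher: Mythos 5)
Your proof is correct and its core is the same as the paper's: the computation of $T_{(M,\phi)}\BunDhat(X,x)$ as the quotient $\End_\D(M)(X\sm x)\bs\D(\cK_x)$ (the paper runs your two-chart gluing argument as the local-to-global sequence for $\underline{\End}_\D(M)$, starting from $\Ext^1_\D(M,M)=H^1(X,\underline{\End}_\D(M))$, which holds because $M$ is locally projective), together with the observation that the Krichever embedding is $\D(\cK_x)$-equivariant because the de Rham functor matches the action on $M|_{\wh{D}_x^\times}$ by right $\D$-module automorphisms with the action on $\cK_x$ by differential operators. The difference is in how you finish. The paper stops at that point: infinitesimal transitivity of $\D(\cK_x)$ on $\BunDhat(X,x)$ plus equivariance already forces the image of the orbit map in $T_W\GR$ to coincide with the image of the differential of the embedding, which is exactly the ``span'' assertion. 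Your additional step --- proving $\on{Stab}(W)=\End_\D(M)(X\sm x)$ inside $\D(\cK_x)$ --- is therefore not needed for the proposition; what it would buy, if completed, is injectivity of the differential of the Krichever map (i.e.\ that the embedding is an immersion on tangent spaces), a stronger statement than the one claimed. Since you present that identification as the main obstacle and only sketch it, the cleanest fix is to delete it and conclude directly from surjectivity of $\D(\cK_x)\to T_{(M,\phi)}\BunDhat(X,x)$ and equivariance. If you do wish to keep it, note that the phrase ``$P$ becomes $\cO_Y$-linear'' is not right (a differential operator of positive order is never $\cO$-linear); the correct input from the Cannings--Holland/Smith--Stafford dictionary is that the differential operators with Laurent coefficients preserving $W=h(M|_{X\sm x})$ are precisely the algebraic ones, $\D(W,W)=\End_\D(M)(X\sm x)$, and this requires an actual argument that $PW\subseteq W$ forces the coefficients of $P$ to be regular on $X\sm x$.
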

\begin{proof}
  Let $D$ and $D^{\times}$ denote the disc and punctured disc at $x$.
  Suppose $M$ is a $\D$-bundle equipped with a trivialization on $D$.
  First-order deformations of $M$ are given by $\Ext^1_\D(M,M)$; since
  $M$ is locally projective over $\D$, we find that $\Ext^1_\D(M,M) =
  H^1(X, \on{End}_\D(M))$.  Using the long exact sequence \bd
  0\rightarrow \underline{\End}_\D(M)\rightarrow
  \underline{\End}_\D(M)|_{X\smallsetminus x} \rightarrow
  \underline{\End}_\D(M)|_{X\smallsetminus
    x}/\underline{\End}_\D(M)\rightarrow 0 \ed and the isomorphisms
  \bd \underline{\End}_\D(M)|_{X\smallsetminus
    x}/\underline{\End}_\D(M) \cong \underline{\End}(M|_{D^\times}) /
  \underline{\End}(M|_D)\cong \D(\cK)/\D(\cO) \ed (the second one
  coming from the given trivialization of $M$ over $D$), we find that
  $$\on{H}^1(X,\underline{\End} (M))= \on{End}(M|_{X\sm x}) \bs
  \on{End}(M|_{D^\times}) / \on{End}(M|_D).$$
  Similarly, the tangent
  space to $\BunDhat(X,x)$ at a pair consisting of a $\D$-bundle $M$
  and trivialization on $D$ is $$\on{End}(M|_{X\sm x}) \bs \D(\cK).$$
  In particular the canonical action of $D(\cK)$ on $\BunDhat(X,x)$
  given by changing the transition functions on the punctured disc is
  infinitesimally transitive. Moreover, the de Rham functor identifies
  the action of $\D(\cK)$ on itself by right $\D$-module automorphisms
  with its action on $\cK$ as differential operators. Thus the
  embedding $\BunDhat(X,x)\to \GR$ is equivariant for $\D(\cK)$.
\end{proof}

\begin{remark}[Isomonodromy and moving the curve]
  We now have two interpretations of the action of vector fields
  $\Der(\cK)$ (or the Virasoro algebra) on Krichever data: on the one
  hand this action deforms the pointed curve $(X,x)$, but on the other
  hand it is tangent to the moduli $\BunDhat(X,x)$ of $\D$-line
  bundles on a {\em fixed} pointed curve $(X,x)$.  This apparent
  discrepancy is accounted for by the isomonodromy connection on the
  moduli spaces $\BunDhat(X,x)$ (see \cite{sug} for a parallel
  discussion of isomonodromy and the Segal-Sugawara construction).
  
  More precisely, as we vary $(X,x)$ infinitesimally, each $\D$-module
  on $X$ has a unique extension to a $\D$-module on the family, i.e.,
  a unique isomonodromic deformation. This gives a canonical
  horizontal distribution on the family of moduli of $\D$-bundles
  (independent of the trivialization near $x$) over the moduli
  $\fM_{g,1}$ of pointed curves. In coordinates (i.e., in terms of the
  Virasoro and $\Winfi$ uniformizations) this distribution is given by
  the embedding $\Der(\cK)\to \D(\cK)$.  Thus, the effect of moving
  the curve infinitesimally on the Krichever data in the Sato
  Grassmannian can be realized by torsion-free sheaves on cusp
  quotients of the fixed curve.
\end{remark}

\subsection{B\"acklund-Darboux transformations and the ad\`elic
Grassmannians}\label{backlund-darboux}
So far we have discussed only the {\em infinitesimal} symmetries of
the Sato Grassmannian and KP hierarchy.  We next turn our attention to
global symmetries, the B\"acklund-Darboux transformations or,
equivalently, Hecke modifications.

The global symmetry transformations of soliton hierarchies are the
B\"acklund-Darboux transformations, which in their classical form for
KdV involve replacing a Lax operator $L$ by $L'$ when the two are
conjugate by a differential operator $P$, i.e. $L=PQ$ and $L'=QP$ for
some differential operator $Q$. More generally, B\"acklund-Darboux
transformations (for equations of KdV type) are the residual
symmetries coming from the loop group symmetries of the corresponding
Grassmannians (see e.g. \cite{Palais}).

On the other hand, the global symmetry transformations of moduli
spaces of vector bundles or principal bundles on curves are the Hecke
correspondences. Two bundles on a curve are related by a (simple)
Hecke modification when we are given an isomorphism between their
restriction to the complement of a point (this is also the origin of
Tyurin coordinates on moduli of bundles).  Since the Hecke
correspondences are the residue of the loop group symmetries
underlying moduli of bundles, it is not surprising that
B\"acklund-Darboux transformations are expressed as Hecke
modifications on the level of geometric solutions of soliton
equations, though this connection does not appear to be very explicit
in the literature (see however \cite{LOZ}).

The Hecke modifications of line bundles are directly related to the
vertex operator of \cite{DJKM} (the bosonic realization of a free
fermion, see \cite{DJM,KNTY}), which is itself a ``Darboux operator in
disguise" \cite{vM}. As is explained in detail in \cite[Ch.20]{book},
the fermion vertex algebra, and specifically the fermion vertex
operator, is obtained as distributions on (i.e., as the group algebra
of) $\cK^\times/\cO^\times$ (whose $\C$-points are the integers
$\mathbf Z$). This group acts on the moduli of line bundles on a
pointed curve, with the generator taking a line bundle $\cL$ to its
Hecke transform $\cL(x)$. The corresponding element of the group
algebra (the delta function at the generator) gives the vertex
operator.

While Hecke modifications of line bundles form a group, Hecke
modifications for vector bundles or principal $G$-bundles do not (they
correspond to cosets, or double cosets, of a loop group). The
algebraic structure of composition of Hecke modifications is precisely
captured by the notion of factorization space, and specifically the
factorization space structure on the Beilinson-Drinfeld Grassmannian.
This is an ad\`elic version of the affine Grassmannian
$G(\cK)/G(\cO)$, which in the $GL_1$ case reduces to the geometry of
the group $\cK^\times/\cO^\times$ above. The infinitesimal version of
this structure is the Kac-Moody vertex algebra, which in the $GL_1$
case reduces to the Heisenberg algebra (i.e. the free boson, or the KP
flows), while the fermionic vertex operator itself generalizes to the
chiral Hecke algebra of Beilinson-Drinfeld (see \cite{book} for a
discussion).

Above we have described a factorization structure on the ad\`elic
Grassmannian of any rank, and shown that its infinitesimal version is
precisely the $\Winfi$ vertex algebra and its higher rank analogues
$\Winfi(\gl_n)$.  On the other hand, the global structure of the
ad\`elic Grassmannian captures the B\"acklund-Darboux transformations
of the KP hierarchy.  We have already seen that the ad\`elic
Grassmannian contains as a factorization subspace the
Beilinson-Drinfeld Grassmannian for $GL_n$ (see \eqref{BD in Grad}),
so in particular in the rank one case contains the information of the
fermionic vertex operator or simple B\"acklund-Darboux-Hecke
transformations.

The B\"acklund-Darboux transformations for the full KP hierarchy were
defined in \cite{BHY1}, where it is observed that the ad\`elic
Grassmannian (as originally defined by Wilson, as a space of
``conditions" \cite{Wilson bispectral}) is precisely the space of all
B\"acklund-Darboux transformations of the trivial solution. (See
\cite{HvdL} for a related study of B\"acklund-Darboux transformations
in terms of actions on the Grassmannian.) A point $V\subset\C((z))$ in
the Sato Grassmannian $\GR=\GR(\C((z)))$ is defined in \cite{BHY1} to
be a B\"acklund-Darboux transformation of a point $W\subset\C((z))$ if
there are polynomials $f,g$ in $z\inv$ such that
$$f\cdot V\subset W \subset g\inv \cdot V.$$
In the case when $W$
corresponds to an algebro-geometric solution $(X,x,\cL)$ of some n-KdV
hierarchy, i.e.  $z^{-n}$ extends to a global function on $X\sm x$
(for example the trivial solution $\C[z\inv]$), this implies that $V$
differs from $W$ by finite-dimensional conditions at some divisor $D$
on $X$. It then follows that $V$ is defined by a torsion-free sheaf on
a curve obtained by adding cusps to $X$ along $D$, and that this sheaf
is identified with $\cL$ off $D$. Equivalently, $V$ is defined by a
$\D$-bundle on $X$ which is identified with $\cL\ot \cD$ off
$D$---i.e. a Hecke modification of the $\D$-bundle $\cL\ot\cD$.
Conversely, all torsion-free sheaves on cusp quotients of $X$ (or
$\D$-bundles on $X$) define points $V$ which are B\"acklund-Darboux
transforms of the given solution $W$. In other words, the ad\`elic
Grassmannian of $\cL\ot\cD$ precisely parametrizes B\"acklund-Darboux
transforms of the corresponding KP solution.

Thus, we obtain a geometric approach to some of the results of
\cite{BHY2,BHY3} relating $\Winfi$, bispectrality and the ad\`elic
Grassmannian. (For a discussion of bispectrality for solutions of KP
and its multicomponent versions in terms of $\D$-bundles see
\cite{solitons}.)  In particular, the fact that the action of $\Winfi$
preserves the spaces of Darboux transformations of various solutions,
or that the corresponding tau functions form representations of
$\Winfi$, are explained by the fact that the $\Winfi$ vertex algebra
is the ``Lie algebra" of the ``group" of B\"acklund-Darboux
transformations, namely the ad\`elic Grassmannian of Hecke
modifications of the corresponding $\D$-bundle.

One advantage of the geometric approach is that it immediately extends
to the multicomponent KP hierarchies (see \cite{vdL} for a study of
the $\Winfi$ symmetries of these hierarchies).  Namely the
$\Winfi(\gl_n)$-action on algebro-geometric solutions exponentiates to
the space of B\"acklund-Darboux-Hecke transforms of the corresponding
$\D$-bundles, and in the case of solutions coming from the affine line
we obtain a bispectral involution. It would be interesting to find
explicit descriptions of the corresponding bispectral solutions and
the action of the $\Winfi$ symmetries on them.

\subsection{$\cW$-geometry}\label{W-geometry}
Conformal field theory has uncovered a fascinating new {\em
  $\cW$-geometry}.  This mathematically mysterious geometric structure
is expected to bear the same relation to the nonlinear $\cW_n$ vertex
algebras (the quantized symmetries of the $n$th KdV hierarchy) as the
moduli of bundles and curves bear to the Kac-Moody and Virasoro (or
$\cW_2$) algebras.  This geometry is further expected to have deep
connections to higher Teichm\"uller theory, isomonodromy, quantization
of higher Hitchin hamiltonians and geometric Langlands (for a sample
see \cite{Hull, Pope, LO}).

The present paper yields a precise mathematical framework for
$\cW$-geometry in the limiting, linear case of $\Winfi$ (studied for
example in \cite{Pope} and references therein), as we sketch below.
Namely, we propose that $\Winfi$ geometry is the geometry of a
noncommutative variety, the quantized cotangent bundles of a Riemann
surface $X$ (for general $X$).\footnote{In the following discussion we
  will suppress the distinctions between the different variants of the
  $\Winfi$ algebra, which differ by the central extension or the
  Heisenberg sub-algebra, whose geometric interpretations are evident.}
This is a natural consequence of the interpretation of the $\Winfi$
Lie algebra as the quantization of the Poisson algebra of functions on
the cotangent bundle of the punctured disc (or Hamiltonian vector
fields on the cylinder). Note that the quantized algebra generalizes
simultaneously the ring structure on functions and the Lie bracket on
Hamiltonian vector fields, hence the corresponding deformation problem
generalizes simultaneously the deformations of line bundles and of the
underlying variety.

Namely we consider the moduli stack $\BunD(X)$ of all $\D$-bundles on
$X$, equipped with the line bundle defined by the determinant of de
Rham cohomology, as a substitute for the moduli of bundles or curves
with $\Winfi$ symmetry. We have shown that the choice of
trivialization at a point $x$ defines a space $\BunDhat(X,x)$ (and
line bundle) which is embedded in the Sato Grassmannian (and
determinant bundle) and is uniformized by $\Winfi$. (An important
distinction from the moduli of bundles, parallel to the moduli of
torsion-free sheaves on a singular curve, is that not all $\D$-bundles
are locally trivial at a given $x$, though they are all generically
trivial.) The tangent space to $\BunD(X)$ at the trivial $\D$-bundle
is given by $\on{H}^1(X,\cD)$, and in fact functions on the formal
neighborhood of the trivial bundle are easily seen to be calculated as
the conformal blocks of the $\Winfi$ vertex algebra on $X$, as
expected from $\Winfi$ geometry.  Analogous statements hold at an
arbitrary $\D$-bundle $M$ with tangent space given by
$\on{H}^1(X,\on{End}(M))$ and formal functions calculated as conformal
blocks of the chiral algebra of endomorphisms of $M$. (More generally,
one can define $\D$-modules on $\BunD(X)$ from conformal blocks of any
representation of $\Winfi$.)

It would be very interesting to obtain a better geometric
understanding of the stack $\BunD(X)$ and its relation to the many
conjectured aspects of $\cW$-geometry.  In particular, for a curve of
genus larger than $1$ this stack is the noncommutative counterpart of
the stack of torsion-free sheaves on a Stein surface with no global
functions, and so may be expected to have good geometric properties.
(Note that the stacks that were studied in \cite{solitons, BGN}---the
Calogero-Moser spaces---are the moduli of {\em filtered} $\D$-bundles,
corresponding to a compactification of the noncommutative surface
under consideration.)  We also hope that this geometry might provide a
hint to the mystery of $\cW_n$-geometry.

\end{document}